\setlist[enumerate, 1]{label={\textup{(\roman*)}}}
\setlist[enumerate, 2]{label={\textup{(\alph*)}}}
\setlist[enumerate]{leftmargin=1em, labelsep=.5em, labelwidth=1.5em, itemindent=2em}
\newtheorem{theorem}{Theorem}
\newtheorem{lemma}[theorem]{Lemma}
\newtheorem{corollary}[theorem]{Corollary}
\theoremstyle{definition}
\newtheorem{definition}[theorem]{Definition}
\let\phi\varphi
\newcommand{\Z}{\mathbb{Z}}
\newcommand{\Q}{\mathbb{Q}}
\newcommand{\R}{\mathbb{R}}
\newcommand{\inv}[1]{#1^{-1}}
\newcommand{\nequiv}{\not\equiv}
\newcommand{\ceil}[1]{\left\lceil#1\right\rceil}
\newcommand{\abs}[1]{\left\lvert#1\right\rvert}
\newcommand{\zav}[1]{\left(#1\right)}
\newcommand{\set}[1]{\left\{#1\right\}}
\newcommand{\dd}{\,\mathrm{d}}
\newcommand{\matr}{\operatorname{M}}
\newcommand{\vol}{\operatorname{Vol}}
\newcommand{\nm}{\operatorname{Nm}}
\newcommand{\nmk}{\nm_{K}}
\newcommand{\nmh}{\nm_{\mathbb{H}}}
\newcommand{\nn}{\operatorname{NN}}
\newcommand{\h}{\mathcal{H}}
\newcommand{\mtrx}[1]{\left(\begin{matrix}#1\end{matrix}\right)}
\newcommand{\ok}{\mathcal{O}_K}
\title{Quaternions and universal quadratic forms over number fields}
\author{Mat\v ej Dole\v z\'alek}
\address{Charles University, Faculty of Mathematics and Physics, Department of Algebra, Sokolov\-sk\' a 83, 18600 Praha~8, Czech Republic}
\email{matej.dolezalek.271828@gmail.com}
\thanks{The author was supported by student faculty grant of the Faculty of Mathematics and Physics of Charles University.}
\date{\today}
\keywords{universal quadratic form, quaternions, totally real number field, geometry of numbers}
\begin{document}

\begin{abstract}
    We study quadratic forms over totally real number fields by using an associated ring of quaternions. We examine some properties of residue class rings of these quaternions and use geometry of numbers to prove that certain ideals of the ring of quaternions contain elements of a small norm. We prove that $x^2+y^2+z^2+w^2+xy+xz+xw$ is universal over $\Q(\zeta_7+\zeta_7^{-1})$ and that $x^2+xy+y^2+z^2+zw+w^2$ represents all totally positive multiples of certain special elements.
\end{abstract}
\maketitle

\section{Introduction}
\label{sec:intro}
The problem of representing positive integers with a given quadratic form has been given considerable attention by number theorists throughout the centuries. The classical results include the famous four-square theorems of Lagrange and Jacobi, which state that the quadratic form $x^2+y^2+z^2+w^2$ represents any positive integer $n$ and that the number of such representations is $8\sum_{4\nmid d\mid n}d$. While Lagrange's original proof and used infinite descent and Jacobi's utilized elliptic functions, Hurwitz \cite{hurwitz} later showed that both can be proved by studying the algebraic properties of certain rings of quaternions and other proofs of Lagrange's four-square theorem use Minkowski's theorem on a convex body and a lattice.

The study of universal quadratic forms extends beyond rational numbers and rational integers to number fields $K$ and their rings of algebraic integers $\ok$.
The existence of a universal form in a small number of variables has been studied extensively and seems rare \cite{blomer-kala1, blomer-kala2, cech-etal, chan-kim-raghavan, earnest-khosravani, kala, BMkim, MHkim, kala-svoboda, krasensky-tinkova-zemkova, yatsyna}.
In this article, we will mainly concern ourselves with the case of a totally real field $K$ whose $\ok$ is a unique factorization domain.
When we look specifically at forms in four variables, these can often be advantageously expressed using quaternions over the respective number field.
Deutsch has used the methods of quaternions and geometry of numbers to prove some results on universality and representation by quaternary quadratic forms over rational numbers and various real quadratic fields \cite{deutsch1, deutsch2, deutsch3, deutsch4, deutsch5, deutsch6, deutsch7}. Kala and Yatsyna \cite{kala-yatsyna} studied the existence of a universal quadratic forms with (rational) integer coefficients and proved that amongst certain fields of small degree, the only ones that admit a universal quadratic form with integer coefficients are $\Q$, $\Q(\sqrt5)$ and $\Q(\zeta_7+\zeta_7^{-1})$. In particular, they proved that
\begin{equation}
    \label{eq:hurwitz-form}
    x^2+y^2+z^2+w^2 + xy+xz+xw
\end{equation}
is universal over $\Q(\zeta_7+\zeta_7^{-1})$ and conjectured that
\begin{equation}
    \label{eq:conjecture-form}
    x^2+xy+y^2 + z^2+zw+w^2
\end{equation}
is also universal. In this article, we will give a proof of the universality of \eqref{eq:hurwitz-form} that utilizes quaternions and geometry of numbers.
We will also use the same methods to prove a weaker result about representation of elements of $\ok$ by \eqref{eq:conjecture-form}.

The method of quaternionic rings is based on expressing a given quadratic form as the norm of elements of some ring of quaternions. In many cases, the universality of a form can then be proven based on such a ring being a principal ideal domain. Thus, the question of representation by a quadratic forms with four variables is linked to the class number of the associated quaternionic ring. The rings corresponding to forms \eqref{eq:hurwitz-form} and \eqref{eq:conjecture-form} over $\Q(\zeta_7+\zeta_7^{-1})$ are known to have class numbers $1$ and $2$ respectively \cite[Tables 8.2, 8.3]{kirschmer-voight}. In case of \eqref{eq:hurwitz-form}, we will give a geometric proof of this.

In Sections~\ref{sec:quaternions}--\ref{sec:gon} we shall work with a totally real number field $K$ whose ring of integers is a unique factorization domain. The main result of these sections will be Theorem~\ref{thrm:suitabilitybound}, which provides a bound on the norm of denominators of fractional ideals of a certain ring of quaternions that need to be checked (see Section~\ref{sec:quaternions} for a precise definition) in order to prove that this ring of quaternions is a principal ideal domain. The bound of Theorem~\ref{thrm:suitabilitybound} and its proof are similar to Minkowski's bound for a number field.

In Section~\ref{sec:pid}, we shall additionally require that $\ok$ contain a unit of every signature, which will allow us to formulate a sufficient condition for a certain type of a quadratic form in four variables (see Definition~\ref{def:H}) to be universal over $K$.

In Section~\ref{sec:2form}, we will examine the cubic field $K=\Q(\zeta_7+\zeta_7^{-1})$ and show that a quadratic form equivalent to \eqref{eq:hurwitz-form} is universal over $K$, meaning that \eqref{eq:hurwitz-form} is also universal. We will also give an explicit formula for the number of representations of a given element, akin to Jacobi's four-square theorem. Finally, in Section~\ref{sec:3form}, we will show that \eqref{eq:conjecture-form} represents all totally positive multiples of certain special elements.

\section{Quaternions over totally real number fields}
\label{sec:quaternions}
Throughout this article, let $K$ be a totally real number field of degree $n$ over $\Q$ and discriminant $d_K$, with $n$ distinct real embeddings $\sigma_1,\sigma_2,\dots,\sigma_n: K\hookrightarrow \R$. Let us also presume that the ring $\ok$ of algebraic integers of $K$ is a unique factorization domain, i.e.\ that the class number of $K$ is $1$. We define the norm of an element $\lambda\in K$ as $\nmk(\lambda)=\sigma_1(\lambda)\cdots \sigma_n(\lambda)$. For $\lambda,\lambda'\in K$, we use $\lambda \succ \lambda'$ to mean $\sigma_t(\lambda)>\sigma_t(\lambda')$ for all $t\in\set{1,\dots,n}$ and we say that $\lambda$ is \emph{totally positive}, if $\lambda\succ 0$. Further, let $K^+$ and $\ok^+$ denote the sets of all totally positive elements of $K$ and $\ok$ respectively.

Given a quadratic form $Q(x_1,\dots,x_d)$ with coefficients from $\ok$, we say that it is \emph{totally positive definite} over $K$, if $Q(x_1,\dots,x_d) \succ 0$ for any $x_1,\dots,x_d\in K$ that are not all zero. For $\lambda\in\ok$, we say that $Q$ \emph{represents} $\lambda$, if the equation $Q(x_1,\dots,x_d)=\lambda$ has a solution $x_1,\dots,x_d\in\ok$. Finally, we say that a totally positive definite quadratic form is \emph{universal} over $K$, if it represents all elements of $\ok^+$.

Let $i$, $j$, $k$ by the usual basis elements of quaternions satisfying the equation
\[
    i^2=j^2=k^2=ijk=-1.
\]
For any quaternion $L=x+yi+zj+wk$, $x,y,z,w\in\R$, we define its \emph{conjugate} $\overline{L} = x-yi-zj-wk$ and its \emph{norm} $\nmh(L) = L\overline{L}=x^2+y^2+z^2+w^2$. We distinguish between this quaternionic norm and the usual field norm of $K$; we will also sometimes consider their composition, which we will call the \emph{double norm} and denote it $\nn = \nmk \circ \nmh$.

Let us now construct a family of rings of quaternions over $K$. The notation of the following definition will stay fixed throughout this article:
\begin{definition}
    \label{def:H}
    Let us choose $A,B,\mu,\nu\in\ok$ and set
    \begin{align*}
        S &= 4A-\mu^2, & T &= BS-\nu^2
    \end{align*}
    in such a way that both $S$ and $T$ are totally positive. Further, set
    \begin{align*}
        \alpha &= \frac{\mu+i\sqrt{S}}{2}, \qquad \beta = \frac{\nu i+j\sqrt{T}}{\sqrt{S}},\\
        \h &= \set{x+y\alpha+z\beta+w\alpha\beta : x,y,z,w\in \ok},\\
        Q(x,y,z,w) &= \zav{x^2+\mu xy+Ay^2} + \nu(yz-xw) + B\zav{z^2+\mu zw+Aw^2}.
    \end{align*}
    Also let $\sigma_t(\alpha)$, $\sigma_t(\beta)$, $\sigma_t(\h)$ and $\sigma_t(Q)$ denote the quaternions, ring and quadratic form that we would get by choosing $\sigma_t(A)$ instead of $A$, $\sigma_t(B)$ instead of $B$ etc.
\end{definition}

\begin{lemma}
    The following holds for any choice of $A$, $B$, $\mu$ and $\nu$ satisfying the constraints of Definition~\ref{def:H}:
    \begin{enumerate}
        \item $\h$ forms a ring (a domain in fact) with quaternion multiplication and addition.
        \item $Q(x,y,z,w)=\nmh(x+y\alpha+z\beta+w\alpha\beta)$ for any $x,y,z,w\in \ok$.
        \item The transformation
        \begin{equation}
            \label{eq:to4squares}
            \mtrx{
            X\\Y\\Z\\W
            }
            =
            \mtrx{
            1&\frac{\mu}{2}&0&-\frac{\nu}{2}\\
            0&\frac{\sqrt{S}}{2}&\frac{\nu}{\sqrt{S}}&\frac{\mu\nu}{2\sqrt{S}}\\
            0&0&\sqrt{\frac{T}{S}}&\frac{\mu}{2}\sqrt{\frac{T}{S}}\\
            0&0&0&\frac{\sqrt{T}}{2}
            }
            \mtrx{
            x\\y\\z\\w
            }
        \end{equation}
        turns $Q$ into the sum of four squares $X^2+Y^2+Z^2+W^2$.
    \end{enumerate}
\end{lemma}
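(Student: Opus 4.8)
The plan is to first extract the three structural identities that govern the arithmetic of $\h$ and then read off all three claims from them. From $\alpha=\frac{\mu+i\sqrt S}{2}$ one computes $\alpha+\overline\alpha=\mu$ and $\alpha\overline\alpha=\frac{\mu^2+S}{4}=A$, using $S=4A-\mu^2$; hence $\overline\alpha=\mu-\alpha$ and $\alpha$ is a root of $t^2-\mu t+A$, i.e.\ $\alpha^2=\mu\alpha-A$. Similarly, since $ij+ji=0$, squaring $\beta=\frac{\nu i+j\sqrt T}{\sqrt S}$ gives $\beta^2=-\frac{\nu^2+T}{S}=-B$, using $T=BS-\nu^2$, and $\overline\beta=-\beta$. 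The one genuinely noncommutative computation is the product $\beta\alpha$: expanding it in the standard basis $1,i,j,k$ and matching coefficients, I expect the ``twisting'' relation $\beta\alpha=-\nu+\mu\beta-\alpha\beta$, equivalently $\overline{\alpha\beta}=-\nu-\alpha\beta$.

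For part (i), I would treat these three identities as a multiplication table. Since $\h$ is visibly an $\ok$-module containing $1$, it suffices to check that each product of the generators $\alpha,\beta,\alpha\beta$ again lies in $\h$. Every such product reduces, via $\alpha^2=\mu\alpha-A$, $\beta^2=-B$ and the twisting relation, to an $\ok$-combination of $1,\alpha,\beta,\alpha\beta$; for instance $\alpha\beta\cdot\alpha=\alpha(\beta\alpha)=-\nu\alpha+A\beta$ and $(\alpha\beta)^2=-\nu\alpha\beta-AB$. This establishes closure, so $\h$ is a ring. That it is a domain is then immediate: because $S$ and $T$ are totally positive, $\sqrt S,\sqrt T\in\R$, so $\h$ embeds in the real Hamilton quaternions, a division ring, and therefore has no zero divisors.

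For part (iii), I would simply rewrite a general element $L=x+y\alpha+z\beta+w\alpha\beta$ in the basis $1,i,j,k$. Substituting the coordinate expressions for $\alpha$, $\beta$ and for $\alpha\beta$ (obtained along the way above) and collecting the coefficients of $1,i,j,k$ yields four linear forms in $x,y,z,w$; these are exactly the rows of the matrix in \eqref{eq:to4squares}, so $(X,Y,Z,W)$ are the ordinary quaternion coordinates of $L$, whence $\nmh(L)=X^2+Y^2+Z^2+W^2$. Part (ii) then follows by expanding this sum of squares: the defining relations $\mu^2+S=4A$ and $\nu^2+T=BS$ are precisely what make the irrational factors $\sqrt S,\sqrt T$ cancel and the spurious $yw$ cross term vanish, leaving exactly $Q(x,y,z,w)$.

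The delicate points are all bookkeeping. The main obstacle is getting the noncommutative relation $\beta\alpha$ right, since it is the sole place where the order of multiplication matters and it underlies both closure in (i) and the simplification in (ii); everything else is a mechanical, if lengthy, expansion in which the totally positive quantities $S$ and $T$ have been rigged so that all square roots disappear.
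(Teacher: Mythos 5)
Your proposal is correct and matches the paper's approach, which simply states that all three claims are verified by direct computation; your identities $\alpha^2=\mu\alpha-A$, $\beta^2=-B$, $\beta\alpha=-\nu+\mu\beta-\alpha\beta$ and the coordinate expansion of $x+y\alpha+z\beta+w\alpha\beta$ in the basis $1,i,j,k$ all check out, and the definitions $S=4A-\mu^2$, $T=BS-\nu^2$ do make the square roots and the $yw$ cross term cancel exactly as you describe. Your elaboration is a faithful filling-in of the paper's one-line proof, not a different route.
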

\begin{proof}
    All of these are easy to verify directly.
\end{proof}
\begin{corollary}
    $Q$ is totally positive definite over $K$.
\end{corollary}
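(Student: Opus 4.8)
The plan is to deduce total positive definiteness directly from part (iii) of the Lemma, applied embedding by embedding. Fix $(x,y,z,w)\in K^4$ not all zero and an index $t\in\set{1,\dots,n}$; I must show $\sigma_t(Q(x,y,z,w))>0$. Since $Q$ has coefficients in $\ok$ and each $\sigma_t$ is a field homomorphism, I first rewrite $\sigma_t(Q(x,y,z,w)) = \sigma_t(Q)\bigl(\sigma_t(x),\sigma_t(y),\sigma_t(z),\sigma_t(w)\bigr)$, reducing the claim to the positive definiteness of the real quadratic form $\sigma_t(Q)$ in four real variables.

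Next I would invoke the $\sigma_t$-analogue of part (iii): applying $\sigma_t$ to the coefficients $A,B,\mu,\nu$ (equivalently, to $S$ and $T$), the same transformation \eqref{eq:to4squares}, now with real entries, turns $\sigma_t(Q)$ into $X^2+Y^2+Z^2+W^2$. The crucial point ensuring this makes sense is that $S$ and $T$ are totally positive, so $\sigma_t(S)>0$ and $\sigma_t(T)>0$, and the radicals $\sqrt{\sigma_t(S)}$, $\sqrt{\sigma_t(T)}$, $\sqrt{\sigma_t(T)/\sigma_t(S)}$ appearing in the matrix are genuine positive real numbers.

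Then I would observe that the matrix in \eqref{eq:to4squares}, evaluated under $\sigma_t$, is upper triangular with diagonal entries $1$, $\tfrac{1}{2}\sqrt{\sigma_t(S)}$, $\sqrt{\sigma_t(T)/\sigma_t(S)}$ and $\tfrac{1}{2}\sqrt{\sigma_t(T)}$, all strictly positive. Hence its determinant is nonzero and the linear map $(\sigma_t(x),\sigma_t(y),\sigma_t(z),\sigma_t(w))\mapsto(X,Y,Z,W)$ is a bijection of $\R^4$. Because $\sigma_t$ is injective and $(x,y,z,w)\neq(0,0,0,0)$, the input vector is nonzero, so its image $(X,Y,Z,W)$ is nonzero as well, giving $\sigma_t(Q(x,y,z,w))=X^2+Y^2+Z^2+W^2>0$. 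As $t$ was arbitrary, $Q(x,y,z,w)\succ 0$.

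There is no real obstacle here: the content is entirely carried by part (iii) of the Lemma, and the only thing to watch is that each embedding sends $S,T$ to positive reals, which is exactly the total positivity hypothesis built into Definition~\ref{def:H}. The argument would fail at an embedding where $S$ or $T$ were negative, since then the transformation would become complex and the four-squares identity would no longer certify positivity; this is precisely why total positivity of $S$ and $T$, rather than mere nonvanishing, is required.
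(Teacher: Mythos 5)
Your argument is correct. The only difference from the paper is which part of the preceding Lemma carries the weight: the paper invokes part (ii), writing $\sigma_t(Q)$ as the Hamilton-quaternion norm $\nmh$ of $\sigma_t(x)+\sigma_t(y)\sigma_t(\alpha)+\sigma_t(z)\sigma_t(\beta)+\sigma_t(w)\sigma_t(\alpha)\sigma_t(\beta)$ and using that the norm of a nonzero quaternion is positive, whereas you invoke part (iii) and diagonalize $\sigma_t(Q)$ into $X^2+Y^2+Z^2+W^2$ via the upper-triangular matrix of \eqref{eq:to4squares} with positive diagonal $1$, $\tfrac{1}{2}\sqrt{\sigma_t(S)}$, $\sqrt{\sigma_t(T)/\sigma_t(S)}$, $\tfrac{1}{2}\sqrt{\sigma_t(T)}$. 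These are two faces of the same fact, and both hinge on $\sigma_t(S),\sigma_t(T)>0$: in the paper's version this is what makes $\sigma_t(\alpha)$ and $\sigma_t(\beta)$ genuine real-coefficient quaternions, while in yours it is what makes the change of variables real and invertible. Your route has the minor merit of making the role of total positivity of $S$ and $T$ explicit (and of noting exactly where the argument would break without it); the paper's is marginally shorter since it needs no determinant computation. Either is a complete proof.
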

\begin{proof}
    Norm of any quaternion different from $0$ is positive and $x+y\alpha+z\beta+w\alpha\beta=0$, if and only if $x=y=z=w=0$. Further, we have
    \[
        \sigma_t\Big(Q(x,y,z,w)\Big) = (\sigma_t(Q))\Big(\sigma_t(x), \sigma_t(y), \sigma_t(z), \sigma_t(w)\Big)
    \]
    and $\sigma_t(Q)$ must, by the same argument, attain only positive values unless all of its arguments are zero.
\end{proof}

In the remainder of this section, we will prove some results about the existence of elements of $\h$ with certain useful properties.
\begin{lemma}
    \label{lem:modvalues}
    Let $\rho \in \ok$ be a prime element and consider $a,b,c\in\ok$ such that at least one of $a$, $b$ is not divisible by $\rho$. Then the polynomial $ax^2+bx+c$ attains at least $\frac{\abs{\nmk(\rho)}}{2}$ different values mod $\rho$.
\end{lemma}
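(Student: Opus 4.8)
The plan is to pass to the residue ring $\ok/(\rho)$. Since $\ok$ is the ring of integers of a number field and $\rho$ is a nonzero prime element, the principal ideal $(\rho)$ is a nonzero prime, hence maximal, ideal of the Dedekind domain $\ok$; therefore $\ok/(\rho)$ is a finite field, and its cardinality equals the ideal norm $N\zav{(\rho)} = \abs{\nmk(\rho)}$. Writing $q = \abs{\nmk(\rho)}$ and reducing $a,b,c$ modulo $\rho$, the task becomes a statement purely about the finite field $\mathbb{F}_q$: I must show that the map $f\colon \mathbb{F}_q \to \mathbb{F}_q$ given by $f(x) = \bar a x^2 + \bar b x + \bar c$ has image of size at least $q/2$, knowing that $\bar a \neq 0$ or $\bar b \neq 0$.

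Next I would bound the sizes of the fibers of $f$ by factoring the difference
\[
    f(x) - f(y) = \zav{x-y}\zav{\bar a(x+y) + \bar b}.
\]
If $\bar a \neq 0$, then for a fixed $x$ the equation $f(y) = f(x)$ forces $y = x$ or $\bar a(x+y) + \bar b = 0$, i.e.\ $y = -\bar b\bar a^{-1} - x$; hence every fiber is contained in the two-element set $\set{x,\ -\bar b\bar a^{-1} - x}$ and so has at most two elements. Since the fibers partition the $q$ elements of the domain, the number of distinct values attained—equal to the number of nonempty fibers—is at least $q/2$. If instead $\bar a = 0$, then $\bar b \neq 0$ by hypothesis and $f(x) = \bar b x + \bar c$ is affine with invertible linear part, hence a bijection, attaining all $q \geq q/2$ values.

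The only genuine subtlety, and the reason the bound carries the factor $\tfrac12$, lies in residue characteristic $2$. Completing the square—which would at once give $\tfrac{q+1}{2}$ distinct values in odd characteristic—is unavailable when $2$ is not invertible modulo $\rho$; there the two points $x$ and $-\bar b\bar a^{-1} - x$ never coincide (as $\bar b \neq 0$), so $f$ is exactly two-to-one and the bound $q/2$ is sharp. The factorization argument above is precisely what lets me sidestep this case distinction and treat all residue characteristics uniformly, so I expect the write-up to be brief once the reduction to $\mathbb{F}_q$ is in place.
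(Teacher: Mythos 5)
Your proof is correct and follows essentially the same route as the paper: both arguments come down to showing that each value is attained by at most two elements of $\ok/\rho\ok$, the paper by noting that $ax^2+bx+c-d$ is a nonconstant polynomial of degree at most $2$ over a field and hence has at most two roots, and you by the equivalent explicit factorization of $f(x)-f(y)$. The reduction to the finite field of size $\abs{\nmk(\rho)}$ and the final count of nonempty fibers are identical.
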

\begin{proof}
    For any $d\in\ok$, $\rho\nmid a$ or $\rho\nmid b$ means that the polynomial $ax^2+bx+c-d$ is non-constant over the finite field $\ok/\rho\ok$, so it has at most two roots in it. Since any $x\in\ok/\rho\ok$ is a root of one such polynomial, there need to be at least $\frac{\abs{\ok/\rho\ok}}{2} = \frac{\abs{\nmk(\rho)}}{2}$ distinct values of the polynomial $ax^2+bx+c$ mod $\rho$.
\end{proof}

\begin{lemma}
    \label{lem:precursors}
    Let $\rho\in\ok$ be a prime element that does not divide $T$. Then there are $x,z\in\ok$ satisfying
    \[
        \rho\mid \nmh(x+\alpha+z\beta).
    \]
\end{lemma}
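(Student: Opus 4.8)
The plan is to translate the divisibility condition into a solvability statement over the residue field $F=\ok/\rho\ok$ (which is a finite field, since $\rho$ is prime and $\ok$ is a UFD) and then to show that the resulting congruence always has a solution. Setting $y=1$ and $w=0$ in the identity $Q(x,y,z,w)=\nmh(x+y\alpha+z\beta+w\alpha\beta)$ gives $\nmh(x+\alpha+z\beta)=Q(x,1,z,0)=x^2+\mu x+A+Bz^2+\nu z$, so it suffices to find $x,z\in\ok$ with
\[
    x^2+\mu x+A+Bz^2+\nu z\equiv 0 \pmod{\rho}.
\]
I would first record that $\rho\nmid T$ forbids $\rho$ from dividing both $B$ and $\nu$: otherwise $\rho\mid BS-\nu^2=T$. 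In particular the coefficients of $Bz^2+\nu z$ satisfy the hypothesis of Lemma~\ref{lem:modvalues}, as do those of the monic polynomial $x^2+\mu x+A$.

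For the generic case $\rho\nmid 2$ I would argue by pigeonhole. Writing $q=\abs{\nmk(\rho)}=\abs{F}$, Lemma~\ref{lem:modvalues} shows that $x^2+\mu x+A$ and $-(Bz^2+\nu z)$ each attain at least $q/2$ distinct values mod $\rho$; since $q$ is odd this count is in fact at least $(q+1)/2$. The two value sets together have more than $q$ elements, so they must overlap, and any common value yields a pair $(x,z)$ solving the congruence.

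The hard part is the case $\rho\mid 2$, where $F$ has characteristic $2$ and the pigeonhole bound $q/2+q/2=q$ is too weak. Here I would exploit that $\phi(x)=x^2+\mu x$ and $\psi(z)=Bz^2+\nu z$ are additive ($\mathbb{F}_2$-linear) endomorphisms of $F$, with kernels $\set{0,\mu}$ and (when $B\neq0$) $\set{0,\nu/B}$, so each image has index $1$ or $2$. Since $-A=A$, it suffices to show $A\in\operatorname{Im}\phi+\operatorname{Im}\psi$, and this holds as soon as the sum is all of $F$. If either image is already $F$ we are done; otherwise both have index exactly $2$, which forces $\mu,B,\nu\neq0$ in $F$. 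Substituting $x=\mu s$ and $z=(\nu/B)s$ shows $\operatorname{Im}\phi=\mu^2 P$ and $\operatorname{Im}\psi=(\nu^2/B)P$, where $P=\set{s^2+s:s\in F}$ is the trace-zero hyperplane. These two index-$2$ subgroups coincide precisely when $\mu^2=\nu^2/B$ (multiplication by a scalar fixes the hyperplane $P$ only when that scalar equals $1$), i.e.\ when $B\mu^2=\nu^2$. Because $S\equiv\mu^2\pmod\rho$ in characteristic $2$, this is equivalent to $\rho\mid BS-\nu^2=T$, which is excluded by hypothesis. Hence the two images are distinct hyperplanes, so their sum is all of $F$, and the congruence is solvable.

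The main obstacle is thus isolating and settling this characteristic-$2$ case: everything hinges on the computation that the two index-$2$ images collapse to a single subgroup exactly when $\rho\mid T$, so that the hypothesis $\rho\nmid T$ is precisely what guarantees a solution.
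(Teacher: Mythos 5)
Your proof is correct. The reduction to solving $x^2+\mu x+A+Bz^2+\nu z\equiv 0\pmod{\rho}$, the observation that $\rho\nmid T$ prevents $\rho$ from dividing both $B$ and $\nu$, and the pigeonhole argument via Lemma~\ref{lem:modvalues} in the case $\rho\nmid 2$ all coincide with the paper's proof. Where you genuinely diverge is the residue characteristic $2$ case. The paper writes $B\equiv b^2\pmod{\rho}$ (squaring is surjective because the unit group of $\ok/\rho\ok$ has odd order) and completes the square to obtain $(x+bz)^2+\mu(x+bz)+A+z(b\mu+\nu)$; either the coefficient $b\mu+\nu$ is nonzero and one solves the resulting linear equation in $z$, or $b\mu+\nu\equiv 0$ forces $T\equiv B\mu^2+\nu^2\equiv(b\mu+\nu)^2\equiv 0\pmod{\rho}$. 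You instead view $x\mapsto x^2+\mu x$ and $z\mapsto Bz^2+\nu z$ as additive maps whose images are subgroups of index $1$ or $2$, identify the index-$2$ images as the scalar multiples $\mu^2P$ and $(\nu^2/B)P$ of the Artin--Schreier hyperplane $P=\set{s^2+s}$, and use nondegeneracy of the trace form to conclude that these hyperplanes coincide only when $B\mu^2=\nu^2$, i.e.\ only when $\rho\mid T$. Both arguments ultimately rest on the same congruence $T\equiv B\mu^2+\nu^2\pmod{\rho}$; the paper's version is shorter and purely computational, while yours is slightly longer but makes structurally transparent why $\rho\nmid T$ is exactly the right hypothesis (two distinct index-$2$ subgroups of $\ok/\rho\ok$ must sum to the whole field). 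Either argument is acceptable.
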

\begin{proof}
    Let $L=x+\alpha+z\beta$ for variables $x,z\in\ok$. Let us distinguish two cases:
    \begin{enumerate}
        \item $\rho\mid 2$, so $\nmk(\rho)$ is even. We will presume that
        \begin{equation}
            \label{eq:rhodivides2}
            x^2+\mu x+A+\nu z+Bz^2 \equiv 0 \pmod{\rho}
        \end{equation}
        has no solution and arrive at a contradiction. Since $\nmk(\rho)$ is even, the multiplicative group of the finite field $\ok/\rho\ok$ has an odd order, and since it is cyclic, the mapping $x\mapsto x^2$ must be bijective on it. Because also $0^2=0$, this means that any element of $\ok/\rho\ok$ can be expressed as a square, so we may write $B\equiv b^2 \pmod{\rho}$ for some $b$. With this, the polynomial in \eqref{eq:rhodivides2} can be rewritten as
        \[
            (x+bz)^2 + \mu(x+bz)+A + z(b\mu+\nu).
        \]
        If $b\mu+\nu\nequiv 0\pmod{\rho}$, then \eqref{eq:rhodivides2} clearly has a solution, so it must be the case that $b\mu+\nu\equiv 0\pmod{\rho}$. But this means that
        \[
            T = 4AB-B\mu^2-\nu^2 \equiv B\mu^2+\nu^2 \equiv (b\mu+\nu)^2 \equiv 0 \pmod{\rho},
        \]
        which is a contradiction. So \eqref{eq:rhodivides2} has a solution.

        \item Suppose that $2\nmid\rho$, so $r:=\abs{\nmk(\rho)}$ is odd. We have $\rho\nmid 1$, so by Lemma~\ref{lem:modvalues}, the expression $x^2+x+A$ attains at least $\ceil{\frac{r}{2}}=\frac{r+1}{2}$ different values mod $\rho$. Similarly, $\rho\mid B,\nu$ would mean $\rho\mid T$, so the aforementioned lemma means that $-Bz^2-\nu z$ also attains at least $\frac{r+1}{2}$ different values. Since
        \[
            \frac{r+1}{2} + \frac{r+1}{2} = r+1 > r = \abs{\ok/\rho\ok},
        \]
        by the pigeonhole principle, the congruence
        \[
            x^2+x+A \equiv -Bz^2-\nu z\pmod{\rho}
        \]
        holds for some $x,z\in\ok$, as we wished to prove. \qedhere
    \end{enumerate}
\end{proof}

\section{Residue class rings of $\h$}
\label{sec:orbits}

\def\modop#1{\ \text{mod}\ #1}
In this section, we will examine some properties of residue rings $\h/\rho\h$, i.e.\ quaternions modulo some (prime) element $\rho\in\ok$. Clearly if $\rho\mid L_1-L_2$, then also $\rho = \overline\rho \mid \overline{L_1}-\overline{L_2}$. This means that the conjugate $\overline{L}$ is well-defined for $L\in\h/\rho\h$, so the norm $\nmh(L)=L\cdot\overline{L}\in\ok/\rho\ok$ is also well-defined. In other words, $\nmh(L\modop\rho) = \nmh(L)\modop\rho$.

\begin{lemma}
    \label{lem:matrixisomorphism}
    For a prime element $\rho\nmid T$, there exists an isomorphism $\psi$ from $\h/\rho\h$ to the ring $\matr_2(\ok/\rho\ok)$ of $2\times2$ matrices over $\ok/\rho\ok$ such that
    \begin{equation}
        \label{eq:isomorphism}
        \nmh(L)\equiv\det\psi(L)\pmod{\rho}
    \end{equation}
    for all $L\in\h/\rho\h$.
\end{lemma}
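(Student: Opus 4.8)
The plan is to identify $R:=\h/\rho\h$ with the matrix algebra abstractly and then check the norm--determinant relation separately. First I would record that $R$ is a $4$-dimensional algebra over the finite field $\mathbb{F}:=\ok/\rho\ok$, free on the images of $1,\alpha,\beta,\alpha\beta$, with multiplication determined by $\alpha^2=\mu\alpha-A$, $\beta^2=-B$ and $\beta\alpha=\mu\beta-\nu-\alpha\beta$ (these follow from the reduced trace and $\nmh$ of $\alpha,\beta$, where conjugation $L\mapsto\overline L$ is well-defined on $R$ as noted before the lemma, so that $\nmh(L)=L\overline L$ is the reduced norm). The goal then splits into two parts: produce an $\mathbb{F}$-algebra isomorphism $\psi\colon R\to\matr_2(\ok/\rho\ok)$, and verify \eqref{eq:isomorphism}.

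For the isomorphism I would show that $R$ is a central simple $\mathbb{F}$-algebra. Centrality is a direct computation: writing a general $c=x+y\alpha+z\beta+w\alpha\beta$ and imposing $c\alpha=\alpha c$ and $c\beta=\beta c$, the relations above force $y=z=w=0$ once $\rho\nmid T$, so the center is $\mathbb{F}$. Simplicity is where the hypothesis $\rho\nmid T$ does the real work: $T$ is, up to units and squares, the reduced discriminant of $\h$ at $\rho$, and $\rho\nmid T$ should be exactly the condition that the reduced-trace pairing on $R$ is non-degenerate, hence that $R$ is semisimple with no nonzero radical. Granting central simplicity, Artin--Wedderburn writes $R\cong\matr_r(D)$ with $D$ a central division $\mathbb{F}$-algebra and $r^2\dim_{\mathbb{F}}D=4$, so either $R\cong\matr_2(\ok/\rho\ok)$ or $R$ is a division algebra. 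The latter is excluded by Lemma~\ref{lem:precursors}: it furnishes $\gamma=x+\alpha+z\beta\neq0$ with $\nmh(\gamma)\equiv0$, i.e.\ $\gamma\overline\gamma=0$, a genuine zero divisor. Hence $\psi$ exists.

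A more hands-on alternative, which I would fall back on if the central-simplicity bookkeeping becomes awkward (in particular when $\rho\mid2$), is to build $\psi$ from the zero divisor directly. Left multiplication on the rank-one left ideal $R\gamma$ (which has $\mathbb{F}$-dimension $2$ since $\gamma$ is a nonzero non-unit with $\nmh(\gamma)=0$) gives a homomorphism $R\to\operatorname{End}_{\mathbb{F}}(R\gamma)\cong\matr_2(\ok/\rho\ok)$; it is injective because its kernel is a proper two-sided ideal of the simple ring $R$, hence an isomorphism by comparing dimensions. Concretely one can instead exploit $\operatorname{trd}(\gamma)=2x+\mu$: when this is nonzero, $e:=\operatorname{trd}(\gamma)^{-1}\gamma$ is a nontrivial idempotent and the Peirce decomposition of $R$ with respect to $e$ exhibits the four matrix units, while when $\operatorname{trd}(\gamma)=0$ the element $\gamma$ squares to zero and a short separate argument produces the matrix units from it.

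Finally I would verify \eqref{eq:isomorphism} for any algebra isomorphism $\psi$. Since the matrix trace is the reduced trace under any splitting, it suffices to check on the basis that $\operatorname{tr}\psi(\alpha)=\mu$, $\operatorname{tr}\psi(\beta)=0$ and $\operatorname{tr}\psi(\alpha\beta)=-\nu$; each holds because $\alpha,\beta,\alpha\beta$ are not scalars in $R$, so $\psi$ sends them to non-scalar matrices whose minimal polynomials $X^2-\mu X+A$, $X^2+B$, $X^2+\nu X+AB$ equal their characteristic polynomials. Consequently $\psi(\overline L)=\operatorname{tr}(\psi(L))I-\psi(L)=\operatorname{adj}\psi(L)$ for every $L$, and therefore $\det\psi(L)\,I=\psi(L)\psi(\overline L)=\psi(L\overline L)=\nmh(L)\,I$, which is \eqref{eq:isomorphism}. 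The main obstacle is the simplicity/non-degeneracy step: pinning down that $\rho\nmid T$ forces $R$ to be split rather than ramified, uniformly across residue characteristics, is the one place the argument is not a routine computation.
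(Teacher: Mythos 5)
Your route is genuinely different from the paper's. The paper is entirely explicit: it takes the pair $(e,f)$ supplied by Lemma~\ref{lem:precursors}, writes down $\psi$ as a concrete linear map on the basis $1,\alpha,\beta,\alpha\beta$ whose $4\times4$ coefficient matrix has determinant $T$ (so $\rho\nmid T$ gives injectivity, and bijectivity follows by counting), and then verifies multiplicativity and \eqref{eq:isomorphism} by direct computation on basis elements. You instead try to recognize $R=\h/\rho\h$ abstractly as a split central simple algebra. Your final step is correct and actually illuminating: for \emph{any} algebra isomorphism $\psi$, the minimal-polynomial argument gives $\operatorname{tr}\psi=\operatorname{trd}$ on the basis (note $\alpha,\beta,\alpha\beta$ are indeed non-scalar in $R$ because their coefficients are not divisible by $\rho$), hence $\psi(\overline L)=\operatorname{adj}\psi(L)$ and $\det\psi(L)=\nmh(L)$; this explains why the paper's verification of \eqref{eq:isomorphism} is automatic.

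The gap is the one you flag yourself: semisimplicity of $R$, which you assert ``should be'' equivalent to $\rho\nmid T$. This is the load-bearing step and it is not optional, since your fallback construction also invokes it (both ``kernel is a proper two-sided ideal of the simple ring $R$'' and the claim $\dim_{\mathbb F}R\gamma=2$ presuppose it; the latter cannot be imported from Lemma~\ref{lem:orbits}, whose proof uses the present lemma). To close it concretely: the Gram matrix of the pairing $(L,M)\mapsto\operatorname{trd}(L\overline M)$ on the basis $1,\alpha,\beta,\alpha\beta$ is exactly the matrix appearing in the proof of Lemma~\ref{lem:orbithensel}, with determinant $T^2$, so $\rho\nmid T$ makes the pairing non-degenerate on $R$; and the Jacobson radical lies in its kernel, because every element of the radical times anything is nilpotent and a nilpotent $t\in R$ has $\operatorname{trd}(t)=0$ (if $t\neq0$ then $1,t$ are independent, so $X^2-\operatorname{trd}(t)X+\nmh(t)$ is the minimal polynomial and must equal $X^2$). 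Hence $R$ is semisimple, and with your centrality computation (which does need the case split $\rho\mid2$ versus $\rho\nmid2$, using $T\equiv-(B\mu^2+\nu^2)$ in the even case) it is central simple of dimension $4$. At that point you do not need Lemma~\ref{lem:precursors} at all: $\ok/\rho\ok$ is finite, so Wedderburn's little theorem already rules out the division-algebra case. None of this is deep, but it is precisely the bookkeeping the paper's explicit matrix sidesteps; in exchange, your argument shows the isomorphism and identity \eqref{eq:isomorphism} exist for structural reasons rather than by a fortunate computation.
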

\begin{proof}
    By Lemma~\ref{lem:precursors}, there exist $e,f\in \ok$ such that $\rho\mid\zav{e^2+\mu e+A+\nu f+Bf^2}$. For $L=x+y\alpha+z\beta+w\alpha\beta\in\h/\rho\h$, we set $\psi(L)=\left(\begin{matrix}X&Y\\Z&W\end{matrix}\right)$, where
    \[
        \left(\begin{matrix}
        X\\Y\\Z\\W
        \end{matrix}\right)
        =
        \left(\begin{matrix}
        1&e+\mu&0&Bf\\
        0&-f&1&e+\mu\\
        0&-(Bf+\nu)&-B&Be\\
        1&-e&0&-(Bf+\nu)
        \end{matrix}\right)
        \left(\begin{matrix}
        x\\y\\z\\w
        \end{matrix}\right).
    \]
    This mapping is linear and its determinant is $T\nequiv 0\pmod{\rho}$, so it is injective. Thus it is also bijective, since $\abs{\h/\rho\h}=\abs{\ok/\rho\ok}^4 = \abs{\matr_2(\ok/\rho\ok)}$. With \[A\equiv -e^2-\mu e - Bf^2-\nu f\pmod{\rho},\] we can directly verify that $\psi(L_1L_2)=\psi(L_1)\psi(L_2)$ for any two basis elements $L_1,L_2\in\set{1,\alpha,\beta,\alpha\beta}$, which implies that $\psi(L_1L_2)=\psi(L_1)\psi(L_2)$ for any $L_1,L_2\in\h/\rho\h$ through linearity. Similarly, \eqref{eq:isomorphism} can be directly verified.
\end{proof}

\begin{definition}
    Let $\rho\in \ok$ be a prime. For $L\in(\h/\rho\h)\setminus\set{0}$ such that $\rho\mid \nmh(L)$, we define its (left) \emph{$\rho$-orbit} as the set
    \(
        \set{DL : D\in\h/\rho\h}
    \).
\end{definition}

\begin{lemma}
    \label{lem:orbits}
    Let $\rho\in \ok$ be a prime element that does not divide $T$ and has norm $r=\abs{\nmk(\rho)}$.
    We claim that:
    \begin{enumerate}
        \item Any $\rho$-orbit has exactly $r^2$ elements,
        \item The intersection of any two different $\rho$-orbits is $\set{0}$,
        \item There are exactly $r+1$ different $\rho$-orbits.
    \end{enumerate}
\end{lemma}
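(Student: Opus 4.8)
The plan is to transport the whole question to the matrix ring via the isomorphism $\psi$ of Lemma~\ref{lem:matrixisomorphism} and then argue with elementary linear algebra over the finite field $F:=\ok/\rho\ok$, which has $r$ elements because $\rho$ is prime. Under $\psi$ the left $\rho$-orbit of $L$ becomes the left-multiplication orbit $\set{NM : N\in\matr_2(F)}$ of the matrix $M:=\psi(L)$, and by \eqref{eq:isomorphism} the hypothesis $\rho\mid\nmh(L)$ together with $L\neq0$ says exactly that $\det M=0$ and $M\neq0$; hence $M$ has rank precisely $1$.

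First I would factor a rank-one matrix as $M=vu$, where $v\in F^2$ is a nonzero column vector and $u$ a nonzero row vector. Then $NM=(Nv)u$, and since $v\neq0$ the linear map $N\mapsto Nv$ is surjective onto $F^2$; therefore the orbit equals $\set{tu : t\in F^2}$, i.e.\ the set of all matrices both of whose rows are proportional to the fixed row vector $u$. Because $u\neq0$, the map $t\mapsto tu$ is injective, so this set has exactly $\abs{F^2}=r^2$ elements, proving (i).

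For (iii) I would observe that replacing $u$ by any nonzero scalar multiple leaves the set $\set{tu : t\in F^2}$ unchanged, while conversely every nonzero element of the orbit is a rank-one matrix whose row space is the line $Fu\subseteq F^2$; thus the orbit determines, and is determined by, this line. Orbits therefore correspond bijectively to lines through the origin in $F^2$, that is, to points of the projective line over $F$, of which there are $r+1$. Finally (ii) is immediate from this description: a nonzero common element of two orbits would be a rank-one matrix whose row space simultaneously equals two distinct lines, which is impossible, so distinct orbits share only the zero element.

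The argument is essentially a bookkeeping exercise once the orbit is recognized as ``all matrices with a prescribed row space'', so there is no real obstacle; the only points requiring care are checking that $M$ has rank exactly one and that left multiplication sweeps out precisely these matrices. As a consistency check, the three claims give $(r+1)(r^2-1)=(r-1)(r+1)^2$ nonzero singular matrices in $\matr_2(F)$, which agrees with a direct count of the rank-one matrices.
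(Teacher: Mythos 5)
Your proof is correct, and within the same framework as the paper's it takes a genuinely different and cleaner route. Both arguments transport the problem to $\matr_2(\ok/\rho\ok)$ via the isomorphism $\psi$ of Lemma~\ref{lem:matrixisomorphism}, but they diverge from there. For (i) the paper computes the annihilator $\set{N : NM\equiv 0}$ by writing out the four linear congruences entry by entry and showing its solutions are parametrized by two free entries, so the orbit has $r^4/r^2=r^2$ elements; you instead describe the orbit itself explicitly via the rank-one factorization $M=vu$, identifying it as the set of all matrices whose rows lie on the line $Fu$ (equivalently, the minimal left ideal generated by $M$). For (iii) the paper counts all nonzero singular matrices by a three-case enumeration, arriving at $(r^2-1)(r+1)$, and divides by the $r^2-1$ nonzero elements per orbit; your identification of orbits with points of the projective line over $F$ gives $r+1$ directly and makes (ii) immediate as well, whereas the paper proves (ii) separately by an inclusion-plus-equal-cardinality argument. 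Your approach buys a structural explanation at the cost of the (easy) rank-one factorization; the paper's is more computational but needs nothing beyond solving linear systems and counting. The one point worth making explicit in your write-up is that every line $Fu$ actually occurs as the row space of some $\psi(L)$ with $L$ in the domain of the orbit definition --- immediate from the surjectivity of $\psi$ and the existence of a rank-one matrix with any prescribed row space --- since that is what makes your correspondence onto and hence yields exactly, not merely at most, $r+1$ orbits.
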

\begin{proof}
    \begin{enumerate}
        \item Let us consider when $D_1L\equiv D_2L\pmod{\rho}$, or equivalently when $DL\equiv 0\pmod{\rho}$ (where $D=D_1-D_2$). We shall use the isomorphism $\psi$ from Lemma~\ref{lem:matrixisomorphism}. Let
        \begin{align*}
            \psi(L) &=
            \left(\begin{matrix}
                x&y\\
                z&w
            \end{matrix}\right),
            &
            \psi(D) &=
            \left(\begin{matrix}
                a&b\\
                c&d
            \end{matrix}\right).
        \end{align*}
        We have $\rho\mid\nmh(L)$, which means $xw\equiv yz\pmod{\rho}$, and since $L\nequiv 0\pmod{\rho}$, the matrix $\psi(L)$ has a non-zero entry -- without loss of generality let it be $x$. The congruence $DL\equiv0\pmod{\rho}$ is then equivalent to the system
        \begin{align*}
            ax+bz &\equiv 0\pmod{\rho},\\
            ay+bw &\equiv 0\pmod{\rho},\\
            cx+dz &\equiv 0\pmod{\rho},\\
            cy+dw &\equiv 0\pmod{\rho}.
        \end{align*}
        The first and third congruence then imply $a\equiv -bz\inv{x}$ and $c\equiv -dz\inv{x}$ respectively. Because the determinant of $\psi(L)$ is zero, this then also ensures
        \begin{align*}
            ay+bw &\equiv -byz\inv{x}+bw \equiv b\inv{x}(xw-yz) \equiv 0\pmod{\rho},\\
            cy+dw &\equiv -dyz\inv{x}+dw \equiv d\inv{x}(xw-yz) \equiv 0\pmod{\rho}.
        \end{align*}
        This means that any suitable $\psi(D)$ uniquely corresponds to any choice of $b$ and $d$, of which there are $r^2$. Since any possible value of $DL$ is yielded by $r^2$ different $D$'s, there are in total $\frac{\abs{\h/\rho\h}}{r^2} = r^2$ elements of the $\rho$-orbit of $L$.

        \item Clearly $0$ belongs to any $\rho$-orbit. If $L_2\equiv DL_1\pmod{\rho}$ and $L_1,L_2\nequiv 0\pmod{\rho}$, then the $\rho$-orbit of $L_2$ must be a subset of the $\rho$-orbit of $L_1$. But by part (i) both of these $\rho$-orbits contain $r^2$ elements, so they must be the same.

        \item From part (i) we know that any $\rho$-orbit contains $r^2-1$ elements different from $0$ whose norm is divisible by $\rho$. Let us now derive the total number of elements $L\in(\h/\rho\h)\setminus\set{0}$ such that $\rho\mid\nmh(L)$. By Lemma~\ref{lem:matrixisomorphism}, we only need to count the non-zero matrices
        \[
            \psi(L) =
            \left(\begin{matrix}
                x&y\\z&w
            \end{matrix}\right)
            \in \matr_2(\ok/\rho\ok)
        \]
        with determinant $0$, i.e.\ $xw-yz\equiv0\pmod{\rho}$. Let us consider several cases.
        \begin{enumerate}
            \item $x\equiv w\equiv 0\pmod{\rho}$. Then one of $y$, $z$ must be zero and the other must be non-zero. This contributes $2(r-1)$ valid $\psi(L)$'s.
            \item Exactly one of $x$, $w$ is non-zero -- there are $2(r-1)$ such pairs. Any $y$, $z$ such that they are not both non-zero will then suffice; there are
            \(
                r^2 - (r-1)^2 = 2r-1
            \)
            such pairs. Altogether, this case contributes
            \(
                2(r-1)(2r-1) = 4r^2-6r+2
            \)
            valid $\psi(L)$'s.
            \item Both $x$, $w$ are non-zero, which means $y$ and $z$ must be non-zero too. The choice of $x$, $y$, $z$ then uniquely determines $w$ as $\inv{x}yz\pmod{\rho}$, so this contributes $(r-1)^3$ valid $\psi(L)$'s.
        \end{enumerate}
        Altogether, we have
        \begin{align*}
            2(r-1) + 4r^2-6r+2 + (r-1)^3 = r^3 + r^2 - r - 1 = (r^2-1)(r+1)
        \end{align*}
        elements $L\in(\h/\rho\h)\setminus\set{0}$ whose norm is divisible by $\rho$. Each of these belongs to exactly one $\rho$-orbit and each $\rho$-orbit contains exactly $r^2-1$ such elements. Thus there must be exactly $r+1$ different $\rho$-orbits.\qedhere
    \end{enumerate}
\end{proof}

\section{Results from geometry of numbers}
\label{sec:gon}
In this section, we will use Minkowski's theorem to prove that all but finitely many (up to multiplication by a unit) primes of $\ok$ satisfy the following condition that will help us in section~\ref{sec:pid} to prove that $\h$ is a principal ideal domain in some cases.

\begin{definition}
    \label{def:suitability}
    We say a prime element $\rho\in \ok$ is \emph{$\h$-suitable}, if for any $L\in\h$ such that $\rho\nmid L$, the fractional ideal $\h\frac{L}{\rho}+\h$ contains a non-zero element $L'$ with $\nn(L')<1$, where $\nn=\nmk\circ\nmh$ is the double norm.
\end{definition}

First, we interpret fractional ideals of $\h$ as lattices. The following notation will be used throughout this section.
\begin{definition}
    Let $L=x+y\alpha+z\beta+w\alpha\beta$, $x,y,z,w\in K$ be a quaternion. Then let us define an injective linear mapping $f$ from the set of such quaternions to $\R^{4n}$ by
    \[
        f(L) := \Big(\sigma_1(x), \sigma_1(y), \sigma_1(z), \sigma_1(w), \sigma_2(x), \sigma_2(y), \sigma_2(z), \sigma_2(w), \dots, \sigma_n(x), \sigma_n(y), \sigma_n(z), \sigma_n(w) \Big).
    \]
    Then we can interpret any (left or right) fractional ideal $\mathcal I$ of $\h$ as a lattice $f(\mathcal{I})$.
\end{definition}

\begin{lemma}
    \label{lem:latticedet}
    The determinant of $f(\h)$ is $d_K^2$, where $d_K$ is the discriminant of $K$. Further, let $\rho\in\ok$ be a prime element that does not divide $T$ and $L\in\h$ a quaternion such that $\rho\mid\nmh(L)$, $\rho\nmid L$. Then
    \begin{equation}
        \label{eq:latticedet}
        \det f\zav{\h\frac{L}{\rho}+\h} = \frac{d_K^2}{\nmk(\rho)^2}.
    \end{equation}
\end{lemma}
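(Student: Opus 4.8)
The plan is to prove the two claims in turn, using the first as the base computation and bootstrapping the second from it together with the orbit count of Lemma~\ref{lem:orbits}.

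For the determinant of $f(\h)$, I would observe that $f$ records, block by block, the images under $\sigma_1,\dots,\sigma_n$ of the four coordinates $x,y,z,w$ of a quaternion $L=x+y\alpha+z\beta+w\alpha\beta$ with respect to the basis $1,\alpha,\beta,\alpha\beta$. As $x,y,z,w$ range independently over $\ok$, the lattice $f(\h)$ is therefore, up to a permutation of the $4n$ standard coordinates of $\R^{4n}$, the orthogonal direct sum of four copies of the Minkowski lattice $\set{\zav{\sigma_1(\gamma),\dots,\sigma_n(\gamma)} : \gamma\in\ok}\subset\R^n$. A coordinate permutation is an orthogonal map and so preserves covolume, while the Minkowski lattice of $\ok$ has covolume $\sqrt{d_K}$ (the defining property of the discriminant of a totally real field). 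Hence $\det f(\h)=\zav{\sqrt{d_K}}^4=d_K^2$.

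For the second claim, set $\mathcal I=\h\frac{L}{\rho}+\h$. Since the summand $\h$ already sits inside $\mathcal I$ and $\rho\mathcal I=\h L+\rho\h\subseteq\h$ (because $L\in\h$), we get a chain of full-rank lattices $\h\subseteq\mathcal I\subseteq\frac{1}{\rho}\h$; in particular $f(\mathcal I)$ is a genuine discrete lattice of full rank, to which the covolume--index relation applies. That relation reads $\det f(\h)=[\mathcal I:\h]\cdot\det f(\mathcal I)$, so by the first claim it suffices to prove the index $[\mathcal I:\h]$ equals $\nmk(\rho)^2=r^2$, whence \eqref{eq:latticedet} follows. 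To compute the index I would pass through multiplication by the central element $\rho$: this is a linear bijection carrying $\mathcal I\mapsto\rho\mathcal I$ and $\h\mapsto\rho\h$, inducing an isomorphism $\mathcal I/\h\cong\rho\mathcal I/\rho\h$, so $[\mathcal I:\h]=[\rho\mathcal I:\rho\h]$. Now $\rho\mathcal I=\h L+\rho\h$, and reducing modulo $\rho$ identifies $\rho\mathcal I/\rho\h$ with the image of $\h L$ in $\h/\rho\h$, i.e.\ with $\set{DL : D\in\h/\rho\h}$, the (left) $\rho$-orbit of $L$. Since $\rho\nmid T$, $\rho\mid\nmh(L)$ and $\rho\nmid L$, Lemma~\ref{lem:orbits}(i) applies and this orbit has exactly $r^2$ elements, giving $[\mathcal I:\h]=r^2=\nmk(\rho)^2$.

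The computation is short; the only real care needed is the bookkeeping in the final step. The main obstacle is to justify cleanly that $\rho\mathcal I/\rho\h$ is exactly the $\rho$-orbit of $L$ — it is closed under addition, hence a subgroup, and has the asserted cardinality — and that the index is unchanged under scaling by $\rho$, so that the purely ring-theoretic orbit count of Lemma~\ref{lem:orbits} can be fed into the geometric covolume--index formula. One should also confirm at the outset, via the inclusions $\h\subseteq\mathcal I\subseteq\frac{1}{\rho}\h$, that $f(\mathcal I)$ is discrete and of full rank, so that the index formula is legitimate.
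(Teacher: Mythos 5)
Your proof is correct and follows essentially the same route as the paper: the first claim is the standard discriminant computation for the Minkowski-type embedding (the paper writes out the $4n$-element $\Z$-basis and takes the fourth power of $\det(\sigma_r(\omega_s))$, which is your block decomposition in different words), and the second claim reduces the covolume to the index $[\h\frac{L}{\rho}+\h:\h]=\nmk(\rho)^2$ via the $\rho$-orbit count of Lemma~\ref{lem:orbits}(i). Your write-up is somewhat more careful than the paper's about justifying the scaling step and the identification of $\rho\mathcal I/\rho\h$ with the orbit, but the underlying argument is identical.
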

\begin{proof}
    Let $\omega_1,\dots,\omega_n$ be an integral basis of $\ok$. Then
    \begin{alignat*}{5}
        &f(\omega_1),\quad            &&f(\omega_2),\quad            &&\dots,\quad  &&f(\omega_n),\\
        &f(\omega_1\alpha),\quad      &&f(\omega_2\alpha),\quad      &&\dots,\quad  &&f(\omega_n\alpha),\\
        &f(\omega_1\beta),\quad       &&f(\omega_2\beta),\quad       &&\dots,\quad  &&f(\omega_n\beta),\\
        &f(\omega_1\alpha\beta),\quad &&f(\omega_2\alpha\beta),\quad &&\dots,\quad  &&f(\omega_n\alpha\beta)
    \end{alignat*}
    is a basis of $f(\h)$ (as a $\Z$-module). This means that
    \[
        \det f(\h) = \zav{\det(\sigma_r(\omega_s))_{r,s=1}^n}^4 = d_k^2.
    \]

    The elements of $\h\frac{L}{\rho}+\h$ are of the form $\frac{DL+C\rho}{\rho}$ for $D,C\in\h$, i.e.\ their numerators attain precisely the values whose residue classes belong to the $\rho$-orbit of $L$. So by Theorem~\ref{lem:orbits}, the index of $\h$ within $\h\frac{L}{\rho}+\h$ is $\abs{\nmk(\rho)}^2 = \nmk(\rho)^2$, which proves \eqref{eq:latticedet}.
\end{proof}

Now we need a centrally symmetric convex set. For this, we generalize an approach used by Deutsch in \cite{deutsch1, deutsch2}. We will use $\vol$ to denote the volume of a set.
\begin{lemma}
    Let us define the \emph{spherical diamond} in $\R^{4n}$ as
    \[
        S_n(r) = \set{(x_1,\dots,x_{4n})\in\R^{4n}: \sum_{t=0}^{n-1} \sqrt{x_{4t+1}^2+x_{4t+2}^2+x_{4t+3}^2+x_{4t+4}^2} < r}.
    \]
    Then this set is centrally symmetric and convex and its volume is $\frac{\pi^{2n}12^n}{(4n)!} r^{4n}$.
\end{lemma}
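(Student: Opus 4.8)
The plan is to prove the two geometric properties first, since they are quick, and then compute the volume by induction on $n$. Central symmetry is immediate: the defining sum depends only on the Euclidean norms of the coordinate blocks $(x_{4t+1},\dots,x_{4t+4})$, so negating all coordinates leaves it unchanged. Convexity follows because $S_n(r)$ is a sublevel set of the function $F(x)=\sum_{t=0}^{n-1}\|x^{(t)}\|_2$, where $x^{(t)}$ denotes the $t$-th block of four coordinates; each block-norm $x\mapsto\|x^{(t)}\|_2$ is convex (being a composition of the Euclidean norm with a coordinate projection), and a sum of convex functions is convex, so its strict sublevel sets are convex.

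For the volume, the key idea is to slice along the block radii. Introduce for each block $t$ its radius $r_t=\|x^{(t)}\|_2\ge 0$; integrating out the angular directions in each $\R^4$ contributes a factor equal to the surface area of a $3$-sphere of radius $r_t$, namely $2\pi^2 r_t^3$. Hence
\[
    \vol\bigl(S_n(r)\bigr)
    = (2\pi^2)^n \int_{\substack{r_1,\dots,r_n\ge 0\\ r_1+\cdots+r_n<r}} r_1^3 r_2^3\cdots r_n^3 \dd r_1\cdots\dd r_n.
\]
The remaining integral is a standard Dirichlet integral over the scaled simplex. Using the Dirichlet formula
\[
    \int_{\substack{r_1,\dots,r_n\ge 0\\ \sum r_i < r}} \prod_{i=1}^n r_i^{a_i-1} \dd r_1\cdots\dd r_n
    = \frac{\prod_{i=1}^n \Gamma(a_i)}{\Gamma\zav{1+\sum_{i=1}^n a_i}}\, r^{\sum a_i}
\]
with every $a_i=4$, the product of gamma factors is $\Gamma(4)^n=(3!)^n=6^n$ and the denominator is $\Gamma(4n+1)=(4n)!$, while the power of $r$ is $r^{4n}$. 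Multiplying by the prefactor $(2\pi^2)^n$ gives $\frac{(2\pi^2)^n\, 6^n}{(4n)!}\,r^{4n}=\frac{\pi^{2n}\,12^n}{(4n)!}\,r^{4n}$, as claimed.

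I expect the main obstacle to be bookkeeping rather than anything deep: one must justify the reduction to the radial integral cleanly, either by invoking the Dirichlet integral formula as a black box or by carrying out an explicit induction on $n$. The inductive route peels off the last block by writing the region as $r_n$ ranging in $[0,r)$ with the first $n-1$ radii lying in $S_{n-1}(r-r_n)$ (at the radial level), which yields a one-variable Beta-integral $\int_0^r r_n^3 (r-r_n)^{4(n-1)}\dd r_n$; evaluating this with the Beta function and simplifying the resulting factorials reproduces the formula. Either way the essential analytic input is the evaluation of the Euler Beta/Dirichlet integral, and the only place to be careful is that each four-dimensional angular integration genuinely contributes the $3$-sphere surface factor $2\pi^2 r_t^3$ and not the ball volume.
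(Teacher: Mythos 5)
Your proof is correct. The symmetry and convexity arguments are equivalent to the paper's (the paper verifies the midpoint condition directly via the triangle inequality; you phrase it as a strict sublevel set of a sum of convex block norms). For the volume, your primary route is genuinely different: you integrate out the angular variables in each $\R^4$ block at once, each contributing the $3$-sphere surface factor $2\pi^2 r_t^3$, and then evaluate the resulting integral over the simplex $r_1+\dots+r_n<r$ in a single stroke with the Dirichlet integral formula, obtaining $(2\pi^2)^n\,\Gamma(4)^n/\Gamma(4n+1)\cdot r^{4n}=\pi^{2n}12^n r^{4n}/(4n)!$. The paper instead proceeds by induction on $n$, peeling off one block via the recursion $\vol S_{n+1}(r)=\int_0^r 2\pi^2(r-y)^3\,\vol S_n(y)\dd y$ and evaluating the resulting polynomial integral by hand; this is precisely the Beta-integral induction you sketch as your alternative. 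Your Dirichlet route is shorter but invokes a classical formula as a black box, while the paper's induction is self-contained, requiring only the volume $\tfrac{\pi^2}{2}r^4$ and surface area $2\pi^2 r^3$ of the $4$-ball. Both arguments are complete and yield the stated formula.
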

\begin{proof}
    That $S_n(r)$ is centrally symmetric is obvious. By triangle inequality, we have that
    \[
        \sqrt{(x_1+y_1)^2+(x_2+y_2)^2+(x_3+y_3)^2+(x_4+y_4)^2} \leq \sqrt{x_1^2+x_2^2+x_3^2+x_4^2} + \sqrt{y_1^2+y_2^2+y_3^2+y_4^2},
    \]
    so for any two points $(x_1,\dots,x_{4n}), (y_1,\dots,y_{4n})\in S_n(r)$, their midpoint satisfies
    \begin{multline*}
        \sum_{t=0}^{n-1} \sqrt{\zav{\frac{x_{4t+1}+y_{4t+1}}{2}}^2+\zav{\frac{x_{4t+2}+y_{4t+2}}{2}}^2+\zav{\frac{x_{4t+3}+y_{4t+3}}{2}}^2+\zav{\frac{x_{4t+4}+y_{4t+4}}{2}}^2} \leq{}\\{}\leq \frac12\sum_{t=0}^{n-1} \sqrt{x_{4t+1}^2+x_{4t+2}^2+x_{4t+3}^2+x_{4t+4}^2} + \frac12\sum_{t=0}^{n-1} \sqrt{y_{4t+1}^2+y_{4t+2}^2+y_{4t+3}^2+y_{4t+4}^2} < \frac{r+r}{2} = r.
    \end{multline*}
    Thus, $S_n(r)$ is convex.

    We will prove by induction that $\vol S_n(r) = \frac{\pi^{2n}12^n}{(4n)!} r^{4n}$. This is true for $n=1$, since $S_1(r)$ is just the $4$-dimensional ball, which has volume $\frac{\pi^2}{2}r^4$ and surface area $2\pi^2r^3$. We can then express $\vol S_{n+1}(r)$ in terms of $\vol S_n(r)$ followingly:
    \begin{align*}
        \vol S_{n+1}(r) &= \int_0^r 2\pi^2(r-y)^3 \cdot \vol S_n(y) \dd y =\\&= 2\pi^2\cdot \frac{\pi^{2n}12^n}{(4n)!} \int_0^r r^3y^{4n} - 3r^2y^{4n+1} + 3ry^{4n+2} - y^{4n+3} \dd y =\\&= r^{4n+4} \cdot \frac{\pi^{2n}12^n}{(4n)!} \cdot 2\pi^2 \cdot \zav{\frac1{4n+1}-\frac3{4n+2}+\frac3{4n+3}-\frac1{4n+4}} =\\&= r^{4n+4} \cdot \frac{\pi^{2n}12^n}{(4n)!} \cdot 2\pi^2 \cdot \frac{6}{(4n+1)(4n+2)(4n+3)(4n+4)} = \frac{\pi^{2(n+1)}12^{n+1}}{(4(n+1))!}r^{4(n+1)}.\qedhere
    \end{align*}
\end{proof}
\begin{corollary}
    The volume of the set
    \[
        J_n(r) = \set{(x_1,\dots,x_{4n})\in\R^{4n}: \sum_{t=0}^{n-1} \sqrt{(\sigma_t(Q))\Big(x_{4t+1},x_{4t+2},x_{4t+3},x_{4t+4}^2\Big)} < r}
    \]
    is $\frac{\pi^{2n}48^n}{(4n)!\nmk(T)}r^{4n}$.
\end{corollary}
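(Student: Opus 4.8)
The plan is to realize $J_n(r)$ as the image of the spherical diamond $S_n(r)$ under an explicit linear map, and then use the fact that a linear map scales volume by the absolute value of its determinant. The essential input is part (iii) of the first lemma: the matrix in \eqref{eq:to4squares} is a linear transformation taking $Q$ to $X^2+Y^2+Z^2+W^2$, and applying the embedding $\sigma_t$ to its entries gives a matrix taking $\sigma_t(Q)$ to a sum of four squares.

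First I would assemble these $n$ blockwise maps into a single linear map $\Phi\colon\R^{4n}\to\R^{4n}$ which acts on the $t$-th block $(x_{4t+1},x_{4t+2},x_{4t+3},x_{4t+4})$ by the matrix obtained from \eqref{eq:to4squares} by applying $\sigma_t$ to all its entries. This is well defined since $S,T$ are totally positive, so $\sigma_t(S),\sigma_t(T)>0$ and the square roots are real. By construction, the image block $(X_{4t+1},\dots,X_{4t+4})$ satisfies $\sigma_t(Q)(x_{4t+1},\dots,x_{4t+4})=X_{4t+1}^2+\cdots+X_{4t+4}^2$. Therefore the defining inequality of $J_n(r)$ is carried exactly onto that of $S_n(r)$, and $\Phi$ restricts to a bijection from $J_n(r)$ onto $S_n(r)$.

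Next I would compute $\det\Phi$. Since the matrix in \eqref{eq:to4squares} is upper triangular, its determinant is the product of its diagonal entries,
\[
    1\cdot\frac{\sqrt S}{2}\cdot\sqrt{\frac TS}\cdot\frac{\sqrt T}{2}=\frac T4,
\]
so the $t$-th block contributes $\frac{\sigma_t(T)}{4}$, a positive quantity. As $\Phi$ is block diagonal, $\abs{\det\Phi}=\prod_{t}\frac{\sigma_t(T)}{4}=\frac{\nmk(T)}{4^n}$, where the product runs over the $n$ real embeddings and $\nmk(T)=\prod_t\sigma_t(T)$ is positive.

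Finally, the change-of-variables formula for volume gives $\vol S_n(r)=\abs{\det\Phi}\cdot\vol J_n(r)$, whence
\[
    \vol J_n(r)=\frac{4^n}{\nmk(T)}\cdot\frac{\pi^{2n}12^n}{(4n)!}r^{4n}=\frac{\pi^{2n}48^n}{(4n)!\,\nmk(T)}r^{4n},
\]
using the value of $\vol S_n(r)$ from the preceding lemma and $12^n4^n=48^n$. The whole argument is routine; the only points requiring care are fixing the direction of the change of variables correctly and reading off the triangular determinant, so I do not anticipate a genuine obstacle.
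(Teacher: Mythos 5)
Your proposal is correct and follows essentially the same route as the paper: both use the block-diagonal map assembled from the embeddings $\sigma_t$ of the matrix in \eqref{eq:to4squares} to carry $J_n(r)$ onto $S_n(r)$, compute its determinant as $\frac{\nmk(T)}{4^n}$ via the triangular form, and conclude by the change-of-variables formula. No issues.
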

\begin{proof}
    Let $M=(m_{rs})_{r,s=1}^4$ be the $4\times4$ matrix on the right hand side in \eqref{eq:to4squares}. Let $\sigma_t(M)$ denote the matrix where $\sigma_t$ is applied element-wise, i.e.\ $(\sigma_t(m_{rs}))_{r,s=1}^4$. Then because of \eqref{eq:to4squares}, the linear transformation $\R^{4n}\to\R^{4n}$ defined by multiplication by a block diagonal matrix with diagonal blocks $\sigma_1(M), \sigma_2(M), \dots, \sigma_n(M)$ maps $J_n(r)$ bijectively onto $S_n(r)$. Since $\det M = \frac{T}{4}$, the determinant of this transformation is $\frac{\nmk(T)}{4^n}$. This means that
    \[
        \vol J_n(r) = \frac{4^n}{\nmk(T)}\cdot \vol S_n(r) = \frac{\pi^{2n}48^n}{(4n)!\nmk(T)}r^{4n}.\qedhere
    \]
\end{proof}

\begin{theorem}
    \label{thrm:suitabilitybound}
    Let $\rho\in\ok$ be a prime element that does not divide $T$. Then if
    \begin{equation}
        \label{eq:suitabilitybound}
        \abs{\nmk(\rho)} > \frac{\sqrt{(4n)!}}{\pi^n3^{n/2}n^{2n}}\cdot d_K\sqrt{\nmk(T)},
    \end{equation}
    then $\rho$ is $\h$-suitable.
\end{theorem}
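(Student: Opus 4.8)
The plan is to apply Minkowski's convex body theorem to the lattice $\Lambda=f\zav{\h\frac L\rho+\h}$, using the spherical diamond $J_n(r)$ as the symmetric convex body and choosing the radius $r$ so that every nonzero lattice point inside $J_n(r)$ automatically has double norm less than $1$.

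First I would fix $L\in\h$ with $\rho\nmid L$ and separate two cases according to whether $\rho$ divides $\nmh(L)$. If $\rho\mid\nmh(L)$, then Lemma~\ref{lem:latticedet} gives $\det\Lambda=d_K^2/\nmk(\rho)^2$. If instead $\rho\nmid\nmh(L)$, then by \eqref{eq:isomorphism} the matrix $\psi(L)$ is invertible over the field $\ok/\rho\ok$, so there is $D\in\h$ with $DL\equiv1\pmod\rho$; this forces $\frac1\rho\in\h\frac L\rho+\h$ and hence $\h\frac L\rho+\h=\frac1\rho\h$, whose image has determinant $d_K^2/\nmk(\rho)^4$. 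In both cases $\det\Lambda\le d_K^2/\nmk(\rho)^2$, so it suffices to establish the bound in the first, larger case.

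The geometric heart is the choice $r=n$. A lattice point $f(L')\in J_n(n)$, with $L'=x'+y'\alpha+z'\beta+w'\alpha\beta$, satisfies $\sum_{t=1}^n\sqrt{\sigma_t(\nmh(L'))}<n$, since $(\sigma_t(Q))(\sigma_t(x'),\dots,\sigma_t(w'))=\sigma_t(\nmh(L'))$. The AM--GM inequality then yields
\[
    \nn(L')=\prod_{t=1}^n\sigma_t(\nmh(L'))=\zav{\prod_{t=1}^n\sqrt{\sigma_t(\nmh(L'))}}^{2}\le\zav{\frac1n\sum_{t=1}^n\sqrt{\sigma_t(\nmh(L'))}}^{2n}<1,
\]
so any nonzero point of $\Lambda$ lying strictly inside $J_n(n)$ is the required $L'$ (nonzero because $f$ is injective). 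Finally I would verify the Minkowski hypothesis $\vol J_n(n)>2^{4n}\det\Lambda$: substituting $\vol J_n(n)=\pi^{2n}48^n n^{4n}/\zav{(4n)!\,\nmk(T)}$ and $\det\Lambda=d_K^2/\nmk(\rho)^2$, and using $48^n/2^{4n}=3^n$, this inequality rearranges to exactly \eqref{eq:suitabilitybound}. Because $J_n(n)$ is open, Minkowski's theorem then produces a nonzero lattice point in its interior.

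I expect the one genuinely substantive step to be the choice $r=n$ together with the AM--GM passage, which is what converts the $\ell^1$-type constraint naturally measured by $J_n$ into the multiplicative bound defining $\nn$; everything else is careful bookkeeping of the constants and the (easier) second divisibility case, together with keeping the inequalities strict so that one obtains $\nn(L')<1$ rather than merely $\nn(L')\le1$.
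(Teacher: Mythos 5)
Your proposal is correct and follows essentially the same route as the paper: Minkowski's theorem applied to the lattice $f\zav{\h\frac{L}{\rho}+\h}$ with the body $J_n(n)$, followed by AM--GM to convert $\sum_t\sqrt{\sigma_t(\nmh(L'))}<n$ into $\nn(L')<1$. The only (immaterial) difference is in the case $\rho\nmid\nmh(L)$, where the paper simply exhibits $L'=\frac{1}{\rho}$ directly rather than noting that the determinant only shrinks.
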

\begin{proof}
    If $\rho\nmid\nmh(L)$, then there is a $\lambda\in\ok$ such that $\lambda\cdot\nmh(L)\equiv 1\pmod{\rho}$, meaning there is a $C\in\h$ such that $\lambda\nmh(L)-C\rho=1$. Then we simply set $L':=\lambda\overline{L}\cdot\frac{L}{\rho}-C=\frac{1}{\rho}$, which clearly suffices.

    From now, we may presume $\rho\mid\nmh(L)$. Consider the lattice $f\zav{\h\frac{L}{\rho}+\h}$ and the set $J_n(n)$. By Lemma~\ref{lem:latticedet} and the condition on $\nmk(\rho)$, we have
    \begin{align*}
        2^{4n}\cdot\det f\zav{\h\frac{L}{\rho}+\h} &= \frac{16^{n}d_K^2}{\nmk(\rho)^2} < 16^{n}d_K^2 \cdot \frac{\pi^{2n}3^{n}n^{4n}}{(4n)!d_K^2\nmk(T)} = \frac{\pi^{2n}48^{n}}{(4n)!\nmk(T)}\cdot n^{4n} = \vol J_n(n),
    \end{align*}
    so the conditions of Minkowski's theorem are met. By this theorem, there is a non-zero element $L'\in \h\frac{L}{\rho}+\h$ such that $f(L')\in J_n(n)$. Since the coordinates of $f(L')$ are the coefficients of $L'$ in the embeddings $\sigma_1,\dots,\sigma_n$, the fact that $f(L')\in J_n(n)$ together with
    \[
        (\sigma_t(Q))\Big(\sigma_t(x),\sigma_t(y),\sigma_t(z),\sigma_t(w)\Big) = \sigma_t\Big(Q(x,y,z,w)\Big) = \sigma_t\Big(\nmh(x+y\alpha+z\beta+w\alpha\beta)\Big)
    \]
    means that $\sum_{t=1}^n \sqrt{\sigma_t(\nmh(L'))} < n$.
    From this, the inequality between the arithmetic and geometric means gives us
    \begin{align*}
        \nn(L') = \nmk(\nmh(L')) &= \zav{\sqrt[n]{\prod_{t=1}^n \sqrt{\sigma_t(\nmh(L'))}}}^{2n} \leq \zav{\frac1n \sum_{t=1}^n \sqrt{\sigma_t(\nmh(L'))}}^{2n} < \zav{\frac nn}^{2n} = 1,
    \end{align*}
    just as we wished to prove.
\end{proof}

\section{When $\h$ is a principal ideal domain}
\label{sec:pid}
As mentioned previously, we shall prove that if every prime element of $\ok$ is $\h$-suitable, then $\h$ is a principal ideal domain. Then we shall use this to prove with some additional conditions that $Q$ represents all elements of $\ok$

\begin{theorem}
    \label{thrm:suitability-pid}
    If every prime element $\ok$ is $\h$-suitable, then $\h$ is a principal ideal domain.
\end{theorem}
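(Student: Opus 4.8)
The plan is to prove that every nonzero left ideal of $\h$ is principal; the case of right ideals will then follow formally, because conjugation $L\mapsto\overline L$ is an anti-automorphism of $\h$ that preserves $\nn$ and carries right ideals to left ideals. (To justify that it maps $\h$ into $\h$ one checks that the trace of each basis element lies in $\ok$, namely $\operatorname{tr}(1)=2$, $\operatorname{tr}(\alpha)=\mu$, $\operatorname{tr}(\beta)=0$, $\operatorname{tr}(\alpha\beta)=-\nu$, so that $\overline L=\operatorname{tr}(L)-L\in\h$.) So I would fix a nonzero left ideal $I$. Since every nonzero $g=x+y\alpha+z\beta+w\alpha\beta\in\h$ has $\nmh(g)=Q(x,y,z,w)\in\ok^+$ (as $Q$ is totally positive definite), the double norm $\nn(g)=\nmk(\nmh(g))$ is a positive rational integer; hence $\set{\nn(g):g\in I\setminus\set0}$ has a least element. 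I choose $g\in I$ attaining it and claim $I=\h g$.

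For the claim, note first that $g\neq0$ is invertible in the quaternion algebra $\h\otimes_{\ok}K$, so $\h g$ is a full $\ok$-lattice and $I/\h g$ is a finite $\ok$-module. Suppose for contradiction that $I\neq\h g$, so $I/\h g\neq0$. Because any nonzero finite $\ok$-module contains an element annihilated by some maximal ideal, there is a prime $\rho\in\ok$ and an element $h\in I\setminus\h g$ with $\rho h\in\h g$. Writing $\rho h=Dg$ with $D\in\h$, I observe that $\rho\nmid D$: otherwise $h=(D/\rho)g\in\h g$, contradicting the choice of $h$.

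Now I would apply $\h$-suitability to $L=D$ (permissible since $\rho\nmid D$, and by hypothesis every prime is $\h$-suitable): there is a nonzero $M\in\h\frac D\rho+\h$ with $\nn(M)<1$. The crucial observation is the identity $\zav{\h\tfrac D\rho+\h}g=\h\tfrac{Dg}\rho+\h g=\h h+\h g\subseteq I$, where I used $Dg=\rho h$ and the fact that the generator $g$ sits on the right. Consequently $Mg\in I$; in particular $Mg\in\h$, so $\nn(Mg)$ is a positive rational integer, and $Mg\neq0$ since $\h\otimes_{\ok}K$ is a domain. But by multiplicativity of $\nn$ (inherited from $\nmh$ and $\nmk$) we get $\nn(Mg)=\nn(M)\nn(g)<\nn(g)$. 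This produces a nonzero element of $I$ of strictly smaller double norm than $g$, contradicting minimality, and forces $I=\h g$.

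The conceptual heart of the argument is the identity $\zav{\h\frac D\rho+\h}g=\h h+\h g$, which lets the small-norm fractional element supplied by suitability be pushed back into $I$ as an \emph{integral} quaternion $Mg$ of strictly smaller double norm; reaching the point where suitability applies requires first peeling a single prime $\rho$ off the finite quotient $I/\h g$ and verifying $\rho\nmid D$, and this is where the hypothesis that \emph{every} prime be $\h$-suitable is used. The remaining ingredients are routine and I would only state them: multiplicativity of $\nn$, the fact that $\h g$ is a full lattice so that $I/\h g$ is finite, and the conjugation-stability of $\h$ needed for the right-ideal reduction.
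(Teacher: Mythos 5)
Your proof is correct and takes essentially the same route as the paper's: pick $g$ in the ideal of minimal double norm, and if the ideal exceeds $\h g$, use $\h$-suitability of a prime $\rho$ to produce a nonzero element $Mg$ of the ideal with $\nn(Mg)=\nn(M)\nn(g)<\nn(g)$, contradicting minimality. The only difference is bookkeeping: the paper extracts $\rho$ from the denominator of $CD^{-1}=L/\lambda$, while you extract it from the torsion of the finite quotient $I/\h g$, which has the small advantage of making the hypothesis $\rho\nmid D$ of Definition~\ref{def:suitability} explicit (the paper leaves the reduction of $L/\lambda$ to lowest terms implicit).
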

\begin{proof}
    Let $\mathcal{I}$ be a left-sided ideal of $\h$. Since $\nn$ maps non-zero elements of $\h$ to positive integers, we may choose a $D\in \mathcal{I}\setminus\set{0}$ such that $\nn(G)$ is minimal. Suppose for contradiction that some $C\in \mathcal{I}$ is not contained in $\h D$. We will show that this would mean the existence of an element $D'\in \mathcal{I}\setminus\set{0}$ with $\nn(D')<\nn(D)$, forming a contradiction.

    Let $CD^{-1}=\frac{C\overline{D}}{\nmh(D)}=\frac{L}{\lambda}$ for some $L\in\h$, $\lambda\in\ok$. Then $\lambda$ cannot be a unit, since $C\notin \h D$, so there is a prime element $\rho\in\ok$ that divides $\lambda$. From Definition~\ref{def:suitability}, there exists a non-zero element $L'\in\h\frac{L}{\rho}+\h \subseteq \h\frac{L}{\lambda}+\h$ that satisfies $\nn(L')<1$. This means that a non-zero element $L'D\in \h C + \h D \subset \mathcal{I}$ satisfies $\nn(L'D)<\nn(D)$, which is a contradiction. This means that $\mathcal{I}=\h D$.
\end{proof}

With this, it is easy to show that a $\rho$-orbit can be represented by an element of $\h$ whose norm is $\rho$ up to a multiplication by a unit.

\begin{lemma}
    \label{lem:orbithensel}
    Let $\rho\in\ok$ be a prime element that does not divide $T$. Then if $L\in\h$ such that $\rho\nmid L$ but $\rho\mid\nmh(L)$, there is a $C\in\h$ such that $\nmh(L+C\rho)$ is divisible by $\rho$ but not by $\rho^2$.
\end{lemma}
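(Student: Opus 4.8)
The plan is to expand $\nmh(L+C\rho)$ as a function of $C$ and show that, when $\rho^2\mid\nmh(L)$, the term linear in $C$ can be made a unit multiple of $\rho$ modulo $\rho^2$. Since $\rho\in\ok$ is a central real scalar, $\overline{C\rho}=\rho\overline{C}$, and a direct expansion yields
\[
    \nmh(L+C\rho) = \nmh(L) + \rho\zav{L\overline{C}+C\overline{L}} + \rho^2\nmh(C).
\]
Here $L\overline{C}+C\overline{L} = L\overline{C}+\overline{L\overline{C}}$ is the reduced trace of $L\overline{C}$, hence a scalar lying in $\ok$ (the ring $\h$ is closed under conjugation, as $\overline\alpha = \mu-\alpha$ and $\overline\beta = -\beta$). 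If $\rho^2\nmid\nmh(L)$ already, I take $C=0$; so I may assume $\rho^2\mid\nmh(L)$. Then modulo $\rho^2$ the identity reduces to $\nmh(L+C\rho)\equiv\rho\zav{L\overline{C}+C\overline{L}}\pmod{\rho^2}$, so it suffices to produce some $C\in\h$ with $L\overline{C}+C\overline{L}\nequiv 0\pmod{\rho}$.

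To find such a $C$ I would pass to the matrix model of Lemma~\ref{lem:matrixisomorphism}. Under the isomorphism $\psi\colon\h/\rho\h\to\matr_2(\ok/\rho\ok)$, quaternion conjugation corresponds to the matrix adjugate. This follows by comparing the quaternion relation $L^2-\zav{L+\overline{L}}L+\nmh(L)=0$ with the Cayley--Hamilton identity for $\psi(L)$ and using $\nmh(L)\equiv\det\psi(L)$ from \eqref{eq:isomorphism}: since $\ok/\rho\ok$ is a field and $\psi(L)\neq 0$, subtracting the two degree-two relations forces $\psi$ to preserve the reduced trace $L+\overline{L}$, whence $\psi(\overline{C}) = \operatorname{tr}(\psi(C))\,I-\psi(C)$ is exactly the adjugate of $\psi(C)$. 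Consequently the scalar $L\overline{C}+C\overline{L}$ is carried to the ordinary trace $\operatorname{tr}\zav{\psi(L)\psi(\overline{C})}$ of $\psi(L)$ against the adjugate of $\psi(C)$.

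It then remains to observe that, writing $\psi(L)=\mtrx{x&y\\z&w}$, the hypothesis $\rho\nmid L$ forces at least one entry to be nonzero mod $\rho$, and each entry can be isolated in the trace by a suitable standard matrix unit: for instance taking $\psi(C)=\mtrx{0&0\\0&1}$ gives $\operatorname{tr}\zav{\psi(L)\psi(\overline{C})}=x$, taking $\psi(C)=\mtrx{1&0\\0&0}$ gives $w$, and the two off-diagonal units recover $\pm y$ and $\pm z$. Choosing the matrix unit matching a nonzero entry of $\psi(L)$ produces a $C$ with nonzero trace, and lifting this $C$ to $\h$ finishes the proof. The only genuinely nontrivial point is the correspondence between quaternion conjugation and the matrix adjugate (equivalently, that $\psi$ preserves the reduced trace); the expansion of the norm and the choice of matrix unit are short computations.
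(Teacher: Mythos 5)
Your proof is correct, but it reaches the key step by a genuinely different route than the paper. The paper also linearizes in $C$ --- implicitly, by applying Hensel's lemma to $Q(x,y,z,w)-\rho$ --- and then shows the linear term cannot vanish for all directions by observing that the gradient of $Q$ at $(x,y,z,w)$ is given by a $4\times4$ system whose determinant is $T^2\nequiv 0\pmod{\rho}$, so that a vanishing gradient would force $L\equiv 0\pmod{\rho}$. You instead identify the linear term explicitly as the reduced trace $L\overline{C}+\overline{L\overline{C}}$ (which is the bilinear form attached to $Q$, so the two linearizations agree), transport it through the isomorphism $\psi$ of Lemma~\ref{lem:matrixisomorphism}, and conclude from the nondegeneracy of the trace pairing on $\matr_2(\ok/\rho\ok)$, made concrete with standard matrix units. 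The supporting claims you need are all sound: $\h$ is closed under conjugation (note $\overline{\alpha\beta}=-\nu-\alpha\beta$), so the linear term lies in $\ok$; and your Cayley--Hamilton comparison does show that $\psi$ preserves the reduced trace, since $\ok/\rho\ok$ is a field and the matrices involved are nonzero, whence $\psi(\overline{C})$ is the adjugate of $\psi(C)$. In the end both arguments prove the same underlying fact --- that the polar form of $Q$ is nondegenerate mod $\rho$ when $\rho\nmid T$ --- but the paper gets it from a direct determinant computation ($T^2$), while you get it structurally from the matrix model; the paper's version is shorter and needs no extra facts about $\psi$, whereas yours avoids the $4\times4$ determinant and makes the role of the hypothesis $\rho\nmid T$ (namely, the existence of the splitting $\psi$) more transparent.
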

\begin{proof}
    Let $L=x+y\alpha+z\beta+w\alpha\beta$. If the derivative of $Q(x,y,z,w)$ with respect to any of the four variables $x$, $y$, $z$, $w$ were non-zero, we could apply Hensel's lemma to the polynomial $Q(x,y,z,w)-\rho$ to show that $\nmh(L+C\rho)\equiv \rho\pmod{\rho^2}$ for some $C\in\h$. Thus, it suffices to show that all four of these derivatives being zero leads to a contradiction. Indeed, this would yield the system of congruences
    \begin{align*}
        2x + \mu y - \nu w &\equiv 0\pmod{\rho},\\
        \mu x + 2Ay + \nu z &\equiv 0\pmod{\rho},\\
        \nu y + 2Bz + B\mu w &\equiv 0\pmod{\rho},\\
        -\nu x + B\mu z + 2ABw &\equiv 0\pmod{\rho}.
    \end{align*}
    But the determinant of this system is $T^2\nequiv 0\pmod{\rho}$, so the only solution is $x\equiv y\equiv z\equiv w\equiv 0\pmod{\rho}$, which contradicts $\rho\nmid L$.
\end{proof}

\begin{lemma}
    \label{lem:quaterniongcd}
    Let $\h$ be a principal ideal domain, $\rho\in\ok$ a prime element and $L\in\h$ such that $\rho\mid\nmh(L)$ but $\rho^2\nmid\nmh(L)$. Then the ideal $\h\rho + \h L$ contains an element $L_0$ such that $\nmh(L_0)=u\rho$ and $u\in\ok$ is a unit.
\end{lemma}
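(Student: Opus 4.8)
The plan is to exploit that $\mathcal I:=\h\rho+\h L$ is a \emph{left} ideal of $\h$ and that the quaternion norm is multiplicative, in order to pin down $\nmh(L_0)$ up to units. Since $\h(\h\rho+\h L)\subseteq\h\rho+\h L$, the set $\mathcal I$ is indeed a left ideal, so by the hypothesis that $\h$ is a principal ideal domain (via the construction in the proof of Theorem~\ref{thrm:suitability-pid}) we may write $\mathcal I=\h L_0$ for some $L_0\in\h$; this $L_0$ is the element we will analyze. I would then read off $\nmh(L_0)$ from two membership facts. First, $\rho\in\mathcal I$ gives $\rho=GL_0$ for some $G\in\h$; applying $\nmh$ and using $\nmh(\rho)=\rho\overline\rho=\rho^2$ together with multiplicativity $\nmh(GL_0)=\nmh(G)\nmh(L_0)$ yields $\nmh(L_0)\mid\rho^2$ in $\ok$. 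As $\ok$ is a unique factorization domain and $\rho$ is prime, this forces $\nmh(L_0)$ to be a unit times one of $1,\rho,\rho^2$. Second, $L\in\mathcal I$ gives $L=HL_0$ for some $H\in\h$, so $\nmh(L)=\nmh(H)\nmh(L_0)$; the hypothesis $\rho^2\nmid\nmh(L)$ then immediately excludes the case $\nmh(L_0)=u\rho^2$.

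It remains to exclude the unit case, i.e.\ to show $\rho\mid\nmh(L_0)$, and for this I would prove the stronger statement that \emph{every} element of $\mathcal I$ has norm divisible by $\rho$. Writing a general element as $P\rho+RL$ with $P,R\in\h$ and expanding the norm gives
\[
\nmh(P\rho+RL)=\nmh(P\rho)+\bigl((P\rho)\overline{RL}+(RL)\overline{P\rho}\bigr)+\nmh(RL).
\]
Here $\nmh(P\rho)=\rho^2\nmh(P)$ and $\nmh(RL)=\nmh(R)\nmh(L)$ are divisible by $\rho$, the latter because $\rho\mid\nmh(L)$ by hypothesis. The middle term equals $\rho\bigl(P\overline{RL}+RL\,\overline P\bigr)$ after pulling out the central scalar $\rho$, and its parenthesized factor equals $\nmh(P+RL)-\nmh(P)-\nmh(RL)$, hence lies in $\ok$ since each of these three norms does; therefore the middle term is divisible by $\rho$ as well. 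Consequently $\rho\mid\nmh(P\rho+RL)$, and in particular $\rho\mid\nmh(L_0)$. Combined with the previous paragraph, this leaves only $\nmh(L_0)=u\rho$ with $u\in\ok^\times$, as claimed.

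The main obstacle is the noncommutative bookkeeping: one must make sure the norm computations land inside $\ok$ rather than merely inside $K$, since a priori a quotient by $\rho$ need only lie in $K$. The clean way around this is the polarized expression $\nmh(P+R)-\nmh(P)-\nmh(R)$, which lies in $\ok$ for all $P,R\in\h$ precisely because each of the three norms does, and out of which the central factor $\rho$ can be extracted; this turns each step above into a genuine divisibility in $\ok$. With that in place, the two constraints $\nmh(L_0)\mid\rho^2$ and $\rho\mid\nmh(L_0)$, sharpened by $\rho^2\nmid\nmh(L)$, determine $\nmh(L_0)$ up to a unit. Notably, this argument uses only that $\h$ is a left principal ideal domain together with multiplicativity of $\nmh$, so it needs neither the hypothesis $\rho\nmid T$ nor the orbit count of Lemma~\ref{lem:orbits}.
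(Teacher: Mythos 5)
Your proof is correct and follows essentially the same route as the paper: write $\h\rho+\h L=\h L_0$ using the PID hypothesis, bound $\nmh(L_0)$ via multiplicativity from $\rho=GL_0$ and $L=HL_0$, and establish $\rho\mid\nmh(L_0)$ by expanding $\nmh(C\rho+DL)$ and using the polarization identity to see that the cross term lies in $\rho\ok$. The only cosmetic difference is that you phrase the cross-term argument as $\nmh(P+RL)-\nmh(P)-\nmh(RL)\in\ok$ while the paper uses $\nmh(DL\overline{C}+1)-\nmh(DL\overline{C})-1$; these are the same trace trick.
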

\begin{proof}
    Since $\h$ is a principal ideal domain, clearly $\h\rho + \h L = \h L_0$ for some $L_0$. Then both $\rho$ and $L$ are left-sided multiples of $L_0$, so $\nmh(L_0)$ is a common divisor of $\nmh(\rho)=\rho^2$ and $\nmh(L)$, which means $\nmh(L_0)\mid \rho$. On the other hand, we have $L_0 = C\rho+DL$ for some $C,D\in\h$, which means that
    \begin{align*}
        \nmh(L_0) &= (C\rho+DL)\overline{(C\rho+DL)} = \nmh(C)\rho^2 + C\rho \overline{(DL)} + DL\overline{(C\rho)} + \nmh(DL) =\\&= \rho^2\nmh(C) + \rho\zav{DL\overline{C}+\overline{(DL\overline{C})}} + \nmh(L)\nmh(D) \equiv 0\pmod{\rho},
    \end{align*}
    since $DL\overline{C}+\overline{(DL\overline{C})} = \nmh\zav{DL\overline{C}+1} - \nmh\zav{DL\overline{C}} - 1$ is an element of $\ok$. So $\rho\mid\nmh(L_0)$, which means that $\nmh(L_0)=u\rho$ for some unit $u\in\ok$.
\end{proof}

Now for a totally positive prime element $\rho$, in order to obtain a representation of $\rho$ from that of $u\rho$, we simple constrain ourselves to fields for which every totally positive unit in $\ok$ is a square -- it is well known that this is equivalent to the existence of units of every signature \cite[p. 111, Corollary 3]{narkiewicz}. 
Lemmas~\ref{lem:quaterniongcd} and \ref{lem:orbithensel} then imply that $Q$ represents any totally positive prime element $\rho\in\ok^+$ that does not divide $T$, since there exists at least one $\rho$-orbit by Lemma~\ref{lem:orbits}.
It remains to deal with prime elements $\rho\mid T$. For this, notice that $\nmh(\nu-\mu\beta+2\alpha\beta) = T$, so if $\rho^2\nmid T$, we have a suitable $L$. This gives us the following corollary.
\begin{corollary}
    \label{cor:final}
    If $\h$ is a principal ideal domain, every totally positive unit in $\ok$ is a square and $T$ is a squarefree element\footnote{An element of $\ok$ is said to be \emph{squarefree} if it is not divisible by any square of a non-unit.} in $\ok$ then $Q$ is universal over $K$.
\end{corollary}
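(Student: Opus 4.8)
The plan is to exploit the identity $Q(x,y,z,w)=\nmh(x+y\alpha+z\beta+w\alpha\beta)$, which says that $Q$ represents $\lambda\in\ok$ precisely when $\lambda=\nmh(L)$ for some $L\in\h$. Because $\h$ is a ring and $\nmh$ is multiplicative on quaternions, the set of elements represented by $Q$ is closed under multiplication: if $\nmh(L_1)=\lambda_1$ and $\nmh(L_2)=\lambda_2$, then $\nmh(L_1L_2)=\lambda_1\lambda_2$ with $L_1L_2\in\h$. Since $\ok$ is a unique factorization domain, it therefore suffices to represent every totally positive prime element, to represent every square of a unit $v$ via $\nmh(v)=v^2$, and to control a single leftover unit. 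The factorisation step uses that $\ok$ admits units of every signature (which the excerpt recalls is equivalent to the hypothesis that every totally positive unit is a square): given $\lambda\in\ok^+$, I would write $\lambda=u\prod_i\rho_i$ with $\rho_i$ prime, replace each $\rho_i$ by a totally positive associate $\eta_i\rho_i$ using a unit $\eta_i$ of the appropriate signature, and absorb the $\inv{\eta_i}$ into $u$, so that $\lambda=u'\prod_i\rho_i'$ with each $\rho_i'\in\ok^+$ prime and $u'$ a unit.

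Next I would represent an arbitrary totally positive prime $\rho$, splitting into two cases. If $\rho\nmid T$, then Lemma~\ref{lem:orbits} guarantees at least one $\rho$-orbit, i.e.\ an $L\in\h$ with $\rho\nmid L$ and $\rho\mid\nmh(L)$; Lemma~\ref{lem:orbithensel} upgrades this to some $L'$ with $\rho\mid\nmh(L')$ but $\rho^2\nmid\nmh(L')$, and then Lemma~\ref{lem:quaterniongcd} (applicable since $\h$ is a principal ideal domain) produces $L_0$ with $\nmh(L_0)=u\rho$ for a unit $u$. If instead $\rho\mid T$, squarefreeness of $T$ gives $\rho^2\nmid T$, so the explicit quaternion $L=\nu-\mu\beta+2\alpha\beta$, which satisfies $\nmh(L)=T$, already meets the hypotheses of Lemma~\ref{lem:quaterniongcd} and again yields $L_0$ with $\nmh(L_0)=u\rho$.

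In both cases I must pass from $u\rho$ to $\rho$. The norm of a nonzero quaternion is totally positive (this is the total positive definiteness of $Q$), so $\nmh(L_0)\succ0$; combined with $\rho\succ0$ this forces the unit $u=\nmh(L_0)/\rho$ to be totally positive, whence $u=v^2$ for some unit $v$ by hypothesis. Then $\nmh(\inv{v}L_0)=v^{-2}\nmh(L_0)=\rho$, so $Q$ represents $\rho$. Assembling everything, for $\lambda=u'\prod_i\rho_i'$ the unit $u'$ is totally positive, hence a square $u'=v^2$ (by the same argument applied to the totally positive $\lambda$ and the totally positive product $\prod_i\rho_i'$), so choosing $L_i\in\h$ with $\nmh(L_i)=\rho_i'$ I would set $L=v\prod_i L_i\in\h$ and obtain $\nmh(L)=v^2\prod_i\rho_i'=\lambda$.

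I expect the genuinely delicate part to be the signature and unit bookkeeping rather than any single computation: one must simultaneously invoke the ``units of every signature'' form of the hypothesis to make the primes totally positive and the ``totally positive unit is a square'' form to eliminate both the per-prime unit $u$ and the global leftover unit $u'$, while checking that total positive definiteness of $Q$ is exactly what certifies these units are totally positive. Everything else reduces to an immediate application of the preceding lemmas together with the multiplicativity of $\nmh$.
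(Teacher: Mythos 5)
Your proposal is correct and follows essentially the same route as the paper: represent each totally positive prime via Lemma~\ref{lem:orbits} + Lemma~\ref{lem:orbithensel} + Lemma~\ref{lem:quaterniongcd} (with the explicit quaternion $\nu-\mu\beta+2\alpha\beta$ of norm $T$ handling primes dividing $T$), use total positivity of $\nmh(L_0)$ to see the leftover unit is totally positive and hence a square, and conclude by multiplicativity of $\nmh$ after rescaling the prime factorization by units of appropriate signature. Your explicit handling of the global leftover unit $u'$ via $\nmh(v)=v^2$ is slightly more careful bookkeeping than the paper's phrasing, but it is the same argument.
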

\begin{proof}
    Let us consider any prime element $\rho\in\ok^+$. If $\rho\nmid T$, then there exists a $\rho$-orbit and thus an $L_1\in\h$ such that $\rho\nmid L_1$ and $\rho\mid\nmh(L_1)$, so by Lemma~\ref{lem:orbithensel} there is also an $L\in\h$ such that $\nmh(L)$ is divisible by $\rho$ but not by $\rho^2$. If $\rho\mid T$, then we simply take $L=\nu-\mu\beta+2\alpha\beta$, since $T$ is squarefree.

    Now by Lemma~\ref{lem:quaterniongcd}, there exists an $L_0$ such that $\nmh(L_0)=u\rho$, where $u$ is a unit, for any totally positive prime element $\rho\in\ok$. Clearly $\nmh(L_0)$ is totally positive, so $u$ is also totally positive and thus a square, i.e.\ $u=v^2$ for a unit $v$. Then $\nmh(v^{-1}L_0) = \rho$, so $Q$ represents any totally positive prime element.

    Finally, recall that every totally positive unit in $\ok$ is a square if and only if $\ok$ contains units of every signature. Thus any prime element $\rho\in\ok$ has a multiple $u\rho\in\ok^+$ where $u$ is a unit, so any element of $\ok^+$ can be written as a product of totally positive prime elements. Multiplicativity of $\nmh$ then means that $Q$ represents every element of $\ok^+$, as we wished to prove.
\end{proof}

\section{The form $x^2+xy+y^2+yz-xw+z^2+zw+w^2$ over $\Q(\zeta_7+\zeta_7^{-1})$}
\label{sec:2form}

Now, we apply our findings to a particular example. We will work with the cubic field $K=\Q(\zeta_7+\zeta_7^{-1})$, where $\zeta_7$ a primitive seventh root of unity. The minimal polynomial of $\zeta_7+\zeta_7^{-1}$ is $x^3+x^2-2x-1$ and $K$ has discriminant $d_k=49$. It is also well known that $\ok$ is a unique factorization domain and that $\ok$ contains units of every signature, and thus every totally positive unit in $\ok$ is a square. We shall use $\phi_m = \zeta_7^m+\zeta_7^{-m}$, $m=1,2,3$ as an integral basis of $\ok$ to make the following useful observation.
\begin{lemma}
    \label{lem:norm-mod-7}
    For any $\lambda\in\ok$, the integer $\nmk(\lambda)$ can only attain residues $0$, $1$ or $-1$ mod $7$.
\end{lemma}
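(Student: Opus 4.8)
The plan is to reduce everything to the behaviour of the rational prime $7$, which is totally ramified in $K$. Because $K/\Q$ is the cyclic cubic subfield of $\Q(\zeta_7)$ and $7\mid d_K=49$, the prime $7$ ramifies, and $v_7(d_K)=2$ together with $[K:\Q]=3$ forces $7\ok=\mathfrak p^{3}$ for a single prime ideal $\mathfrak p$ whose residue field is $\ok/\mathfrak p\cong\mathbb{F}_7$. Concretely one may take $\mathfrak p=(2-\phi_1)$: evaluating the minimal polynomial $x^3+x^2-2x-1$ of $\phi_1$ at $x=2$ gives $\nmk(2-\phi_1)=7$, so $2-\phi_1$ is a prime element, $\ok/(2-\phi_1)$ has exactly $7$ elements, and $(2-\phi_1)\cap\Z=7\Z$.

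First I would use that $K/\Q$ is Galois with group $G=\langle\tau\rangle\cong\Z/3\Z$ and that $\mathfrak p$ is the unique prime above $7$, so each element of $G$ fixes $\mathfrak p$ and hence induces an automorphism of the prime field $\ok/\mathfrak p\cong\mathbb{F}_7$; the only such automorphism is the identity, so $g(\lambda)\equiv\lambda\pmod{\mathfrak p}$ for every $g\in G$ and every $\lambda\in\ok$. Writing the norm as a product over $G$ then gives
\[
    \nmk(\lambda)=\prod_{g\in G}g(\lambda)\equiv\lambda^{3}\pmod{\mathfrak p}.
\]
Choosing the integer representative $m\in\{0,1,\dots,6\}$ of the class of $\lambda$ in $\ok/\mathfrak p\cong\mathbb{F}_7$, this reads $\nmk(\lambda)\equiv m^{3}\pmod{\mathfrak p}$; since $\nmk(\lambda)$ and $m^{3}$ are rational integers and $\mathfrak p\cap\Z=7\Z$, the congruence already holds modulo $7$. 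Finally, the image of the cubing map on the cyclic group $\mathbb{F}_7^{\times}$ of order $6$ is its subgroup of order $2$, namely $\{1,-1\}$, so $m^{3}\bmod 7\in\{0,1,-1\}$, which is exactly the claim.

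The only step that is not purely formal is the ramification bookkeeping that produces a unique prime above $7$ with residue field $\mathbb{F}_7$; once this is in hand the rest is immediate. If one prefers to avoid invoking ramification theory, the same computation can be made completely explicit: from $\mathfrak p=(2-\phi_1)$ one has $\phi_1\equiv 2$, hence $\phi_2=\phi_1^2-2\equiv 2$ and, using $\phi_1+\phi_2+\phi_3=-1$, also $\phi_3\equiv 2\pmod{\mathfrak p}$. An arbitrary $\lambda=a\phi_1+b\phi_2+c\phi_3$ thus satisfies $\lambda\equiv 2(a+b+c)\pmod{\mathfrak p}$, and therefore $\nmk(\lambda)\equiv\lambda^3\equiv 8(a+b+c)^3\equiv (a+b+c)^3\pmod 7$, again landing in $\{0,1,-1\}$.
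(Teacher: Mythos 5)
Your proof is correct, but it reaches the conclusion by a genuinely different route than the paper. The paper writes $\lambda = x\phi_1+y\phi_2+z\phi_3$ and expands the norm form explicitly, discovering the polynomial identity $\nmk(\lambda)=(x+y+z)^3-7(x^2z+y^2x+z^2y+xyz)$, and then invokes the fact that the only cubes mod $7$ are $0,\pm1$. You instead argue structurally: $7$ is totally ramified, $7\ok=\mathfrak p^3$ with $\mathfrak p=(2-\phi_1)$ and residue field $\mathbb F_7$, the Galois group fixes $\mathfrak p$ and hence acts trivially on the prime residue field, so $\nmk(\lambda)=\prod_g g(\lambda)\equiv\lambda^3\pmod{\mathfrak p}$, and the same cubic-residue observation finishes. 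All the supporting steps check out (the tame ramification bookkeeping, $\nmk(2-\phi_1)=f(2)=7$ for the minimal polynomial $f$, and $\mathfrak p\cap\Z=7\Z$ to descend the congruence to $\Z/7\Z$). What your approach buys is conceptual clarity and generality: the same argument shows that in any Galois field with a unique prime of residue degree $1$ above $p$, norms are $[K:\Q]$-th powers mod $p$, e.g.\ for $\Q(\zeta_p+\zeta_p^{-1})$ and the prime $p$ in general; what the paper's computation buys is complete self-containedness, needing no ramification or Galois theory. Your explicit fallback in the last paragraph ($\phi_1\equiv\phi_2\equiv\phi_3\equiv2\pmod{\mathfrak p}$, hence $\nmk(\lambda)\equiv(a+b+c)^3\pmod 7$) essentially recovers the paper's identity reduced mod $7$, so the two proofs meet at the end.
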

\begin{proof}
    If $\lambda = x\phi_1+y\phi_2+z\phi_3$, $x,y,z\in\Z$, then we can directly express
    \begin{align*}
        \nmk(\lambda) &= x^3+y^3+z^3+x^2y+3y^2z+3z^2x-4x^2z-4y^2x-4z^2y-xyz =\\
        &= (x+y+z)^3 - 7(x^2z+y^2x+z^2y+xyz).
    \end{align*}
    We finish by observing that $0$, $1$ and $-1$ are the only cubic residues mod $7$.
\end{proof}

We will now examine the quadratic form and corresponding quaternionic ring determined by $(A,B,\mu,\nu)=(1,1,1,1)$. This means that $T=2$ and
\[
Q(x,y,z,w) = x^2+xy+y^2+yz-xw+z^2+zw+w^2,
\]
which is equivalent to \eqref{eq:hurwitz-form}. Now if $2$ were reducible in $\ok$, there would be a prime element with norm $\pm2$, which would contradict Lemma~\ref{lem:norm-mod-7}, so $T=2$ is a prime element in $\ok$.

We wish to prove that $Q$ is universal over $K$. If we use Corollary~\ref{cor:final}, it remains only to show that $\h$ is a principal ideal domain. Theorem~\ref{thrm:suitabilitybound} tells us that for that, we need to just check $\h$-suitability of prime elements that divide $T$ (i.e.\ just $2$ itself) and of those $\rho$ whose $\abs{\nmk(\rho)}$ is at most
\[
    \frac{\sqrt{12!}}{\pi^3\cdot3^{3/2}\cdot3^6}\cdot 49\sqrt{\nmk(2)} \approx 25.83.
\]
Using Lemma~\ref{lem:norm-mod-7}, we see that the only possible norms of these primes are $7$, $8$ and $13$, since they are the only prime powers $\leq 25$ that are congruent to $1$, $0$ or $-1$ mod $7$. For these norms, we have prime elements
\begin{align*}
    \rho_7 &= 2-\phi_1, & \rho_8 &= 2, & \rho_{13} &= 3+\phi_1.
\end{align*}
Each $\rho_p$ is the only prime of its norm up to automorphism and multiplication by a unit. Since automorphisms and multiplication by units clearly preserve $\h$-suitability, we only need to check that these three prime elements are $\h$-suitable.

As we've seen in the proof of Theorem~\ref{thrm:suitabilitybound}, in showing the existence of a non-zero $L'\in\h\frac{L}{\rho}+\h$ with $\nn(L')<1$, we only need to deal with the case $\rho\mid\nmh(L)$. We will thus show the $\h$-suitability of each $\rho_t$, $t=7,8,13$ by showing that each of their $\rho_t$-orbits can be represented by an element whose norm is $\rho_t$. This will mean that any $\h\frac{L}{\rho}+\h$ contains an $L'$ such that $\nmh(L')=\frac1\rho$, meaning $\nn(L')=\frac1{\nmk(\rho)}<1$. We deal with the three prime elements separately:
\begin{enumerate}
    \item $\rho=\rho_8=2$. We will show that there is only one $2$-orbit. Suppose that $L=x+y\alpha+z\beta+w\alpha\beta$ and $2\mid \nmh(L)$. This means that
    \begin{align*}
        0 &\equiv \nmh(L) = x^2+xy+y^2 + yz-xw + z^2+zw+w^2 \equiv\\&\equiv x^2+z^2 + xy+xw+zy+zw + y^2+w^2 \equiv\\&\equiv (x+z)^2+(x+z)(y+w)+(y+w)^2\pmod2.
    \end{align*}
    Since the polynomial $t^2+t+1$ has no roots in the finite field $\ok/2\ok$, the only solution to $a^2+ab+b^2\equiv 0\pmod 2$ is $a\equiv b\equiv 0$. Thus $2$ divides both $x+z$ and $y+w$, so
    \[
        L = \left(\frac{x+z}2 + \frac{y+w}2\alpha + \frac{z-x}2\beta + \frac{w-y}2\alpha\beta\right)\cdot(1+\beta),
    \]
    meaning the only $2$-orbit is that of $1+\beta$. But the (quaternionic) norm of $1+\beta$ is $2$, meaning $2$ is $\h$-suitable.

    \item $\rho=\rho_7=2-\phi_1$. By Lemma~\ref{lem:orbits}, there are $8$ left $\rho_7$-orbits in total. Using the fact that $\phi_2^2+\phi_2+1=\rho_7$, they can be represented by the following elements with quaternionic norm $\rho_7$:
    \begin{align*}
        & \phi_2+\alpha, && \phi_2\alpha+\beta, && \phi_2\beta+\alpha\beta, && \phi_2-\alpha\beta,\\
        & 1+\phi_2\alpha, && \alpha+\phi_2\beta, && \beta+\phi_2\alpha\beta, && 1-\phi_2\alpha\beta.
    \end{align*}

    \item $\rho=\rho_13 = 3+\phi_1$. By Lemma~\ref{lem:orbits}, there are $14$ left $\rho_{13}$-orbits in total. Using the fact that
    \[
        \rho_{13} = 3+\phi_1 = \phi_2^2 + \phi_2\phi_3 + \phi_3^2 = 1 + \phi_3^2 = \phi_3^2 - \phi_3(1+\phi_3) + (1+\phi_3)^2 - (1+\phi_3) + 1,
    \]
    these $14$ orbits can be represented by the following elements with quaternionic norm $\rho_{13}$:
    \begin{align*}
        & \phi_3+\phi_2\alpha, && \phi_3\beta+\phi_2\alpha\beta, && \phi_3-(1+\phi_3)\alpha+\beta, && \phi_3+\beta,\\
        & \phi_2+\phi_3\alpha, && \phi_2\beta+\phi_3\alpha\beta, && 1-(1+\phi_3)\alpha+\phi_3\beta, && 1+\phi_3\beta,\\
        & \phi_3\alpha+\phi_2\beta, && \phi_3-\phi_2\alpha\beta, && \phi_3\alpha-(1+\phi_3)\beta+\alpha\beta,\\
        & \phi_2\alpha+\phi_3\beta, && \phi_2-\phi_3\alpha\beta, && \alpha-(1+\phi_3)\beta+\phi_3\alpha\beta.
    \end{align*}
\end{enumerate}
Thus by Theorem~\ref{thrm:suitability-pid}, $\h$ is a principal ideal domain, meaning that Corollary~\ref{cor:final} gives us:
\begin{theorem}
    \label{thrm:2form-universal}
    The quadratic form
    \(
        x^2+xy+y^2+yz-xw+z^2+zw+w^2
    \)
    is universal over $\Q(\zeta_7+\zeta_7^{-1})$.
\end{theorem}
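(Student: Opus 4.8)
The plan is to deduce the theorem from Corollary~\ref{cor:final}, which guarantees universality of $Q$ as soon as three conditions hold: $\h$ is a principal ideal domain, every totally positive unit of $\ok$ is a square, and $T$ is squarefree. Two of these are immediate for $K=\Q(\zeta_7+\zeta_7^{-1})$ with the choice $(A,B,\mu,\nu)=(1,1,1,1)$. It is classical that $\ok$ contains units of every signature, so every totally positive unit is a square; and $T=2$ is squarefree -- in fact prime -- because Lemma~\ref{lem:norm-mod-7} rules out any element of norm $\pm 2$. Hence the whole burden reduces to proving that $\h$ is a principal ideal domain.

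For this I would appeal to Theorem~\ref{thrm:suitability-pid}, which reduces the claim to showing that \emph{every} prime element of $\ok$ is $\h$-suitable. The point of Theorem~\ref{thrm:suitabilitybound} is that all but finitely many primes are automatically $\h$-suitable: plugging in $n=3$, $d_K=49$ and $\nmk(T)=\nmk(2)=8$, its threshold on $\abs{\nmk(\rho)}$ comes out to roughly $25.83$. Together with Lemma~\ref{lem:norm-mod-7}, which forces $\nmk(\rho)\equiv 0,\pm1\pmod 7$, the only prime powers below the threshold that can be a norm are $7$, $8$ and $13$. Up to automorphism and multiplication by units there is a single prime of each norm, namely $\rho_7=2-\phi_1$, $\rho_8=2$ and $\rho_{13}=3+\phi_1$, and since $\h$-suitability is invariant under these operations, it suffices to certify these three (the prime $2$ dividing $T$ is also handled in this list).

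The heart of the argument is then the explicit check that each of $\rho_7$, $\rho_8$, $\rho_{13}$ is $\h$-suitable. As noted in the proof of Theorem~\ref{thrm:suitabilitybound}, only the case $\rho\mid\nmh(L)$ requires work, and for it I would exhibit, for each prime, a representative of every $\rho$-orbit whose quaternionic norm is exactly $\rho$; any such $L_0$ lies in $\h\frac{L}{\rho}+\h$ and satisfies $\nmh(L_0/\rho)=1/\rho$, so $\nn(L_0/\rho)=1/\nmk(\rho)<1$. Since $\rho=2$ divides $T$ it lies outside the scope of Lemma~\ref{lem:orbits}, so I would treat it by hand: reducing $\nmh(L)$ modulo $2$ yields $(x+z)^2+(x+z)(y+w)+(y+w)^2$, and the irreducibility of $t^2+t+1$ over the field $\ok/2\ok$ forces $2\mid x+z$ and $2\mid y+w$, so every such $L$ is a left multiple of $1+\beta$. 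There is thus a single $2$-orbit, represented by $1+\beta$ with $\nmh(1+\beta)=2$. For $\rho_7$ and $\rho_{13}$, Lemma~\ref{lem:orbits} fixes the orbit counts at $8$ and $14$, and I would list concrete norm-$\rho$ representatives, constructing them from the identities $\phi_2^2+\phi_2+1=\rho_7$ and $\rho_{13}=1+\phi_3^2=\phi_3^2-\phi_3(1+\phi_3)+(1+\phi_3)^2-(1+\phi_3)+1$ that write each prime as a value of $Q$.

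The main obstacle is precisely this bookkeeping-heavy final step: producing the complete list of orbit representatives and verifying, especially for $\rho_{13}$, that the fourteen chosen quaternions of norm $\rho_{13}$ really do land in distinct orbits and exhaust all of them. The orbit counts from Lemma~\ref{lem:orbits} turn exhaustiveness into the finite task of confirming the right number of inequivalent elements, so the work is mechanical, but it demands care in selecting the norm-$\rho$ quaternions and in keeping the resulting orbits separate.
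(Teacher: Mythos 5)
Your proposal follows the paper's proof essentially verbatim: reduce to Corollary~\ref{cor:final}, use Theorem~\ref{thrm:suitabilitybound} and Lemma~\ref{lem:norm-mod-7} to cut the $\h$-suitability check down to the primes of norm $7$, $8$, $13$, handle $\rho=2$ by hand via the irreducibility of $t^2+t+1$ over $\ok/2\ok$, and certify $\rho_7$ and $\rho_{13}$ by exhibiting norm-$\rho$ representatives of all $8$ resp.\ $14$ orbits built from the same identities the paper uses. The only thing missing is the actual finite list of orbit representatives, which the paper supplies and which you correctly identify as the remaining mechanical verification.
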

Note that this quadratic is equivalent to the form~\eqref{eq:hurwitz-form}, since substituting
\begin{align*}
    X &= 2x+y-w, & Y &= -x,\\
    Z &= -x+z+w, & W &= -x+w
\end{align*}
yields $X^2+Y^2+Z^2+W^2+XY+XZ+XW = x^2+xy+y^2+yz-xw+z^2+zw+w^2$. This means that by Theorem~\ref{thrm:2form-universal}, the form \eqref{eq:hurwitz-form} is also universal over $\Q(\zeta_7+\zeta_7^{-1})$.

Let us also briefly note that when $\h$ is a principal ideal domain, it is also possible to prove an analogue of Jacobi's four-square theorem, that is, to derive an explicit formula, which given a totally positive element $\lambda\in\ok^+$ produces the number of quaternions $L\in\h$ with $\nmh(L)=\lambda$. Let us denote this number $r_{\h}(\lambda)$. This formula will typically involve a sum over ideals of $\ok$ that divide $\lambda\ok$ (with some restriction related to $T$) multiplied by the number of elements in $\h$ with quaternionic norm $1$. For $K=\Q(\zeta_7+\zeta_7^{-1})$ and $(A,B,\mu,\nu)=(1,1,1,1)$, there are exactly $24$ elements of norm $1$ in $\h$, so the number of representations is
\begin{equation}
    \label{eq:jacobi}
    r_{\h}(\lambda) = 24\sum_{\substack{\lambda\ok\subseteq\delta\ok\\2\nmid\delta}} \abs{\ok/\delta\ok},
\end{equation}
where the sum is over ideals $\delta\ok$ of $\ok$. Once it is known that $\h$ is a principal ideal domain, the proof of the representation formula can be carried out for example by modifying the proof of classical Jacobi's theorem due to Hurwitz \cite{hurwitz} in a straightforward manner.

\section{The form $x^2+xy+y^2+z^2+zw+w^2$ over $\Q(\zeta_7+\zeta_7^{-1})$}
\label{sec:3form}

Let us examine another quadratic form $Q=x^2+xy+y^2+z^2+zw+w^2$ over $K=\Q(\zeta_7+\zeta_7^{-1})$. In the construction of Definition~\ref{def:H}, this corresponds to $(A,B,\mu,\nu)=(1,1,1,0)$ and results in $T=3$, which is a prime element of $\ok$, since by Lemma~\ref{lem:norm-mod-7} there cannot be a prime element with norm $3$. The approach of the previous section fails for this form, since the quaternionic ring $\h$ is known to have class number $2$ \cite[Table 8.3]{kirschmer-voight}. The issue is that in Lemma~\ref{lem:quaterniongcd}, the ideal $\h\rho+\h L$ may be non-principal. However, the fact that the class number of $\h$ is $2$ can be used to obtain some weaker results on the representation of certain special elements of $\ok$ by $Q$.

We shall be using the fact that the determinant of multiplication by a quaternion $L$ viewed as a linear transformation of a vector space of quaternions over $K$ is $\nmh(L)^2$. If we then consider an ideal $\mathcal I$ and its associated lattice $f(\mathcal{I})$ as defined in Section~\ref{sec:gon}, we get $\det f(\mathcal{I}\cdot L) = \det f(\mathcal{I})\cdot \nn(L)^2$.

Let $\rho_7 = 2 - (\zeta_7+\zeta_7^{-1})$, this is a prime element of norm $7$ in $\ok$. It will also be useful that $\rho_7$ is the only prime element of norm $7$ up to multiplication by a unit, since it divides its conjugates $2 - (\zeta_7^2+\zeta_7^{-2})$ and $2 - (\zeta_7^3+\zeta_7^{-3})$. We start by fixing the ideal $\mathcal{S}:= \h(1+\alpha+2\beta)+\h\rho_7$. This ideal is non-principal and thus, since the class number of $\h$ is $2$, it is equivalent to any other non-principal ideal of $\h$. By the results of Section~\ref{sec:gon}, the determinant of the lattice associated to $\frac1{\rho_7}\mathcal S$ is $\det f(\frac1{\rho_7}\mathcal{S})=\frac{d_K^2}{\nmk(\rho_7)^2}$, which then implies $\det f(\mathcal{S}) = \frac{d_K^2}{\nmk(\rho_7)^2}\cdot\nn(\rho_7)^2 = d_K^2\cdot 7^2$. We can also observe that $\rho_7\mid\nmh(P)$ for any $P\in\mathcal{S}$; this is derived in the same way as in the proof of Lemma~\ref{lem:quaterniongcd}.

\begin{lemma}
    \label{lem:squarefree-ideals}
    Let $\lambda\in\ok^+$ be squarefree. Then there exists a quaternion $L\in\h$ such that the ideal $\mathcal{I}:=\h L + \h\lambda$ has $\det f(\mathcal{I}) = d_K^2\cdot\nmk(\lambda)^2$.
\end{lemma}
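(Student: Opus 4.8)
The plan is to convert the statement about $\det f(\mathcal I)$ into an index computation and then construct $L$ prime by prime. Since $\mathcal I=\h L+\h\lambda\subseteq\h$, the standard relation between covolume and index, together with $\det f(\h)=d_K^2$ (Lemma~\ref{lem:latticedet}), gives $\det f(\mathcal I)=[\h:\mathcal I]\,d_K^2$, so it suffices to find $L$ with $[\h:\mathcal I]=\nmk(\lambda)^2$. As $\lambda$ is central we have $\lambda\h\subseteq\mathcal I\subseteq\h$ with $\h/\lambda\h\cong(\ok/\lambda\ok)^4$, hence $[\h:\lambda\h]=\nmk(\lambda)^4$; therefore the goal is equivalent to arranging that the left ideal $\mathcal I/\lambda\h$ of $\h/\lambda\h$ generated by the image of $L$ has exactly $\nmk(\lambda)^2$ elements, because then $[\h:\mathcal I]=\nmk(\lambda)^4/\nmk(\lambda)^2$.

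First I would reduce to a single prime by the Chinese remainder theorem. Writing $\lambda$ as a product $\rho_1\cdots\rho_m$ of pairwise non-associate primes -- this is the one place where squarefreeness of $\lambda$ is used, guaranteeing each prime occurs only once -- we obtain a ring isomorphism $\h/\lambda\h\cong\prod_i\h/\rho_i\h$, under which the left ideal generated by an element is the product of the left ideals generated by its components. Thus it is enough to choose, for each $i$, an element $L_i\in\h$ whose image generates a left ideal of $\h/\rho_i\h$ of cardinality exactly $\nmk(\rho_i)^2$, and then let $L\equiv L_i\pmod{\rho_i\h}$ for every $i$ by CRT; the resulting $L$ yields $[\h:\mathcal I]=\prod_i\nmk(\rho_i)^2=\nmk(\lambda)^2$.

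For a prime $\rho_i\nmid T$ this is immediate from the orbit theory: Lemma~\ref{lem:precursors} provides $L_i$ with $\rho_i\mid\nmh(L_i)$ and $\rho_i\nmid L_i$ (the coefficient of $\alpha$ being $1$), and the image of $\h L_i$ in $\h/\rho_i\h$ is precisely the $\rho_i$-orbit of $L_i$, which has $\nmk(\rho_i)^2$ elements by Lemma~\ref{lem:orbits}(i). Crucially, this step uses only Lemma~\ref{lem:orbits} and not that $\h$ is a principal ideal domain, so it remains valid in the present non-principal setting.

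The main obstacle is the prime $\rho_i$ dividing $T=3$, where the matrix description (Lemma~\ref{lem:matrixisomorphism}) and hence the orbit count (Lemma~\ref{lem:orbits}) are unavailable. Here I would take the special element $L_i=\nu-\mu\beta+2\alpha\beta$, which satisfies $\nmh(L_i)=T$ and, since $\nu=0$ in this section, $\overline{L_i}=-L_i$, so that $L_i^2=-T=-3$. This identity resolves the difficulty directly: for any $q\in\h$ we have $3q=q(-L_i^2)=(-qL_i)L_i\in\h L_i$, so $3\h\subseteq\h L_i$ and the image of $\h L_i$ in $\h/3\h$ equals $\h L_i/3\h$. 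Its cardinality is $[\h:3\h]/[\h:\h L_i]=\nmk(3)^4/\nmk(3)^2=\nmk(3)^2$, where $[\h:\h L_i]=\nn(L_i)^2=\nmk(T)^2$ follows from the scaling relation $\det f(\h L_i)=\det f(\h)\,\nn(L_i)^2$. This matches the required $\nmk(\rho_i)^2$, so once the ramified prime is handled the CRT assembly finishes the argument. I expect essentially all the genuine content to sit in this last case; the remaining steps are bookkeeping with lattice indices and the orbit lemmas already established.
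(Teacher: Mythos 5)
Your proposal is correct and follows essentially the same route as the paper: reduce to the prime case via the Chinese remainder theorem, use the $\rho$-orbit count from Lemma~\ref{lem:orbits} for primes not dividing $T$, and handle the ramified prime $3$ with an explicit element of quaternionic norm $T$. The only cosmetic difference is your choice of $\nu-\mu\beta+2\alpha\beta$ and the identity $L^2=-T$ for the ramified case, where the paper takes $L=1+\alpha$ and uses $3=\overline{L}L\in\h L$ directly; both yield $\h L+\h 3=\h L$ and the same index.
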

\begin{proof}
    Firstly, if $\lambda$ is a prime element not dividing $T$, the existence of $\lambda$-orbits guarantees the existence of such $L$. The determinant comes from
    \[
        \det f(\mathcal{I}) = \det f\zav{\h\frac L\lambda + \h} \cdot \nn(\lambda)^2 = \frac{d_K^2}{\nmk(\lambda)^2}\cdot\nmk(\lambda)^4.
    \]
    If $\lambda$ is a prime element that divides $T=3$, that it is $3$ itself up to a multiplication by a unit. Then $L=1+\alpha$ suffices, since $\nmh(L)=3$, so
    \[
        \det f(\mathcal{I}) = \det f(\h L) = \det f(\h)\cdot \nn(L)^2 = d_K^2\cdot\nmk(3)^2.
    \]

    If $\lambda$ is not a prime element, we use Chinese remainder theorem: for each prime element $\rho\mid\lambda$, there is an ideal $\h P + \h\rho$. By Chinese remainder theorem, since all of these $\rho$ are distinct and thus coprime, we can find an $L$ modulo $\lambda$ that corresponds to these $P$ modulo $\rho$. Clearly, this will give a relation on the indices
    \[
        \abs{\h / (\h L+\h\lambda)} = \prod_{\rho} \abs{\h / (\h P+\h\rho)} = \prod_{\rho} \nmk(\rho)^2 = \nmk(\lambda)^2,
    \]
    leading to $\det f(\mathcal{I}) = \det f(\h L+\h\lambda) = d_K^2\cdot\nmk(\lambda)^2$.
\end{proof}

\begin{theorem}
    Let $\theta\in\ok^+$ be such that there is a $P\in \mathcal{S}$ with $\nmh(P)=\rho_7\cdot\theta$ and $Q$ also represents $\theta$. Then $Q$ represents $\lambda\theta$ for any $\lambda\in\ok^+$.
\end{theorem}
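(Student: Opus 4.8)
The plan is to use that $Q$ represents $\mu\in\ok^+$ precisely when $\mu=\nmh(L)$ for some $L\in\h$, together with the multiplicativity $\nmh(L_1L_2)=\nmh(L_1)\nmh(L_2)$ and the fact that the class number of $\h$ is $2$. Two reductions come first. Since $\nmh(s)=s^2$ for a scalar $s\in\ok$, writing $\lambda=s^2\lambda_0$ with $\lambda_0$ squarefree and totally positive lets one pass from a representation $\nmh(L)=\lambda_0\theta$ to $\nmh(sL)=\lambda\theta$; hence it suffices to treat squarefree $\lambda$. Moreover, because $\ok$ contains units of every signature, every totally positive unit is a square, which I would use repeatedly to promote equalities valid only up to a unit factor into genuine representations.

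For squarefree totally positive $\lambda$, I would apply Lemma~\ref{lem:squarefree-ideals} to obtain an integral left ideal $\mathcal I=\h L+\h\lambda\subseteq\h$ with $\det f(\mathcal I)=d_K^2\nmk(\lambda)^2$; a local analysis, as in the proof of that lemma, shows that the $\ok$-ideal generated by the values $\nmh(x)$ for $x\in\mathcal I$ equals exactly $\lambda\ok$. Now $\mathcal I$ is either principal or, since the class number of $\h$ is $2$, equivalent to $\mathcal S$, and I would split into these two cases. In the principal case $\mathcal I=\h L_0$, the generated norm ideal is $\nmh(L_0)\ok=\lambda\ok$, so $\nmh(L_0)=u\lambda$ for a unit $u$; as $\nmh(L_0)$ and $\lambda$ are totally positive, $u$ is a totally positive unit, hence a square $u=v^2$, and $\nmh(v^{-1}L_0)=\lambda$. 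Combining this with a quaternion $M\in\h$ realizing the hypothesis $\nmh(M)=\theta$ gives $\nmh(v^{-1}L_0M)=\lambda\theta$, so $Q$ represents $\lambda\theta$.

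If instead $\mathcal I$ is non-principal, then it lies in the same class as $\mathcal S$, so $\mathcal I=\mathcal S\gamma$ for an invertible quaternion $\gamma$ with coefficients in $K$. Tracking norm ideals through this scaling (the ideal-level refinement of $\det f(\mathcal S\gamma)=\det f(\mathcal S)\,\nn(\gamma)^2$, using $\det f(\mathcal S)=d_K^2\cdot 7^2$) yields $\rho_7\,\nmh(\gamma)\ok=\lambda\ok$, whence $\nmh(\gamma)=w\lambda\rho_7^{-1}$ for a unit $w$. Here the hypothesis on $P$ enters: since $P\in\mathcal S$, we have $P\gamma\in\mathcal S\gamma=\mathcal I\subseteq\h$, so $P\gamma$ is a genuine element of $\h$ with
\[
    \nmh(P\gamma)=\nmh(P)\,\nmh(\gamma)=\rho_7\theta\cdot w\lambda\rho_7^{-1}=w\lambda\theta.
\]
As in the first case $w$ is a totally positive unit, hence a square $w=v^2$, and $\nmh(v^{-1}P\gamma)=\lambda\theta$, so $Q$ represents $\lambda\theta$ again. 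It is worth noting that this argument explains why the theorem needs both hypotheses: the representation $\nmh(M)=\theta$ settles the principal case, while $P\in\mathcal S$ with $\nmh(P)=\rho_7\theta$ settles the non-principal one.

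I expect the main obstacle to be exactly the ideal bookkeeping in the non-principal case: upgrading the determinant identity $\det f(\mathcal I)=d_K^2\nmk(\lambda)^2$ to the exact equality of norm ideals $\nmh(\gamma)\ok=\lambda\rho_7^{-1}\ok$, and justifying that the equivalence $\mathcal I=\mathcal S\gamma$ may be realized by right multiplication by an invertible quaternion in a way compatible with reduced norms and with the containment $P\gamma\in\mathcal I\subseteq\h$. Once this is set up carefully, the remaining steps — multiplicativity of $\nmh$ and the square-unit normalization afforded by units of every signature — are routine.
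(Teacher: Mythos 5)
Your overall strategy coincides with the paper's: reduce to squarefree $\lambda$, take the ideal $\mathcal I=\h L+\h\lambda$ from Lemma~\ref{lem:squarefree-ideals}, split according to whether $\mathcal I$ is principal, use the representation of $\theta$ in the principal case and the element $P\in\mathcal S$ in the non-principal case, and finish by multiplicativity of $\nmh$. The squarefree reduction, the principal case, and the unit normalization via totally positive units being squares are all fine (indeed slightly more careful than the paper, which absorbs the unit into the scaling quaternion instead).

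However, the step you explicitly defer --- passing from the determinant identity to the equality of ideals $\rho_7\,\nmh(\gamma)\ok=\lambda\ok$ --- is the actual crux of the non-principal case, and the justification you sketch does not suffice. Comparing determinants only gives $\nn(\gamma)^2=\nmk(\lambda)^2/7^2$, i.e.\ $\nmk(\nmh(\gamma))=\nmk(\lambda)/7$, and an element of $K$ is not determined up to units by its field norm, so no ``ideal-level refinement'' of the determinant computation alone can produce $\nmh(\gamma)\ok=\lambda\rho_7^{-1}\ok$. (Your alternative claim that the norm ideal of $\mathcal I$ equals $\lambda\ok$ ``by a local analysis'' would do the job, but it is not established in the paper and is not a formal consequence of Lemma~\ref{lem:squarefree-ideals}.) The paper closes this gap with two additional arguments: writing $\nmh(U)=\gamma/\delta$ with $\nmk(\delta)=7$, it invokes the fact that $\rho_7$ is the \emph{unique} prime of norm $7$ up to units to force $\delta=\rho_7$; then, from $\lambda=UP_0$ with $P_0\in\mathcal S$ and $\nmh(P_0)=\rho_7\eta$, it deduces $\gamma\eta=\lambda^2$, so that $\gamma$ divides $\lambda^2$, which together with $\nmk(\gamma)=\nmk(\lambda)$ and the squarefreeness of $\lambda$ pins $\gamma$ down to $\lambda$ times a unit. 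You need to supply this (or an equivalent) argument; as written, the non-principal case is incomplete exactly at the point you identified as the main obstacle.
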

\begin{proof}
    It suffices to prove the theorem just for squarefree elements $\lambda$. Let $\mathcal{I} = \h L + \h\lambda$ be the ideal from Lemma~\ref{lem:squarefree-ideals}. If $\mathcal I$ is principal, then it is generated some quaternion with norm $\lambda$. Since we are also presuming that $Q$ represents $\theta$, i.e. there is a quaternion with norm $\theta$, the proof in this case is concluded by the multiplicativity of the norm.

    Thus from now on, let us presume that $\mathcal I$ is non-principal. Then it is equivalent to $\mathcal S$, so $\mathcal{I} = U\cdot\mathcal{S}$ for some quaternion $U$ over $K$. By comparing the determinants of lattices, we get
    \[
        \nn(U)^2 = \frac{\det f(\mathcal{I})}{\det f(\mathcal{S})} = \frac{\nmk(\lambda)^2}{7^2},
    \]
    so $\nn(U)=\frac{\nmk(\lambda)}{7}$. This means that $\nmh(U) = \frac{\gamma}{\delta}\in K$ for some $\gamma,\delta\in\ok$ with $\nmk(\gamma)=\nmk(\lambda)$ and $\nmk(\delta)=7$. Since $\rho_7$ is the only prime element of norm $7$ up to multiplication by a unit, we can assume $\delta=\rho_7$.
    Further, because $\lambda\in\mathcal{I}$, we have $\lambda = U\cdot P_0$ for some $P_0\in \mathcal{S}$, whose norm we know to be $\rho_7\cdot\eta$ for some $\eta\in\ok^+$. This leads to
    \[
        \lambda^2 = \nmh(\lambda) = \frac{\gamma}{\delta}\cdot\rho_7\eta = \gamma\eta.
    \]
    Now $\gamma$ divides $\lambda$ and has the same norm, so it is the same up to multiplication by a unit. Since multiplication by a unit of $\ok$ preserves the ideal $\mathcal I$, we can assume $\nmh(U) = \frac{\lambda}{\rho_7}$.

    Now, since we are presuming the existence of $P\in\mathcal{S}$ such that $\nmh(P)=\rho_7\theta$, we have $U\cdot P \in \mathcal{I}\subseteq \h$ and $\nmh(UP) = \frac{\lambda}{\rho_7}\cdot\rho_7\theta = \lambda\theta$, as we wished. Thus, independently of whether $\mathcal I$ is principal or not, we find a quaternion with norm $\lambda\theta$.
\end{proof}

Now it remains to find some elements $\theta$. These seem to be abundant, and in fact, we have not been able to find a totally positive prime element $\theta$ that does not meet the conditions of the theorem. This is somewhat consistent with the conjecture that $Q$ is in fact universal over $K$. For example, it can be verified that the values $\theta\in\set{2-(\zeta_7+\zeta_7^{-1}), 2, 3+(\zeta_7+\zeta_7^{-1}), 3}$ satisfy the conditions of the theorem, giving the following corollary.
\begin{corollary}
    The quadratic form $x^2+xy+y^2+z^2+zw+w^2$ over $\Q(\zeta_7+\zeta_7^{-1})$ represents any totally positive multiple of $2-(\zeta_7+\zeta_7^{-1})$, $2$, $3+(\zeta_7+\zeta_7^{-1})$, or $3$. The norms of these four elements are $7$, $8$, $13$ and $27$ respectively.
\end{corollary}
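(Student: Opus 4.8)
The corollary follows directly from the preceding theorem once we verify, for each of the four elements $\theta\in\set{\rho_7,\,2,\,3+\phi_1,\,3}$ (where $\phi_1=\zeta_7+\zeta_7^{-1}$ and $\rho_7=2-\phi_1$), the two hypotheses: (i) that $Q$ represents $\theta$, and (ii) that there is a $P\in\mathcal{S}$ with $\nmh(P)=\rho_7\theta$. Granting both, the theorem yields that $Q$ represents $\lambda\theta$ for every $\lambda\in\ok^+$, which is exactly the assertion of the corollary. It is worth noting that in~(ii) we may freely replace $\theta$ by a totally positive unit multiple $u\theta$: since $\ok$ contains units of every signature, $u=v^2$ is a square, and as $\mathcal{S}$ is stable under the central unit $v$ we may pass from any $P$ with $\nmh(P)=\rho_7 u\theta$ to $v^{-1}P\in\mathcal{S}$ with norm $\rho_7\theta$; moreover $\set{\lambda\theta:\lambda\in\ok^+}=\set{\lambda u\theta:\lambda\in\ok^+}$, so the final conclusion is unaffected.

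Hypothesis~(i) is immediate by exhibiting explicit quaternions. Here $(A,B,\mu,\nu)=(1,1,1,0)$ gives $\alpha=\tfrac{1+i\sqrt3}{2}$ and $\beta=j$, so that $\nmh(x+y\alpha+z\beta+w\alpha\beta)=Q(x,y,z,w)$. Directly $\nmh(1+\beta)=Q(1,0,1,0)=2$ and $\nmh(1+\alpha)=Q(1,1,0,0)=3$, while the arithmetic identities $\phi_2^2+\phi_2+1=\rho_7$ and $1+\phi_3^2=3+\phi_1$ recorded earlier give $\nmh(\phi_2+\alpha)=Q(\phi_2,1,0,0)=\rho_7$ and $\nmh(1+\phi_3\beta)=Q(1,0,\phi_3,0)=3+\phi_1$. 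Thus $Q$ represents each of the four values.

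The substance of the proof is hypothesis~(ii). For each $\theta$ I would produce an explicit $P=D(1+\alpha+2\beta)+C\rho_7\in\mathcal{S}$, with $D,C\in\h$, and confirm $\nmh(P)=\rho_7\theta$ by evaluating $Q$ on the coordinates of $P$. Equivalently, one searches for a quaternion $P\in\h$ whose residue modulo $\rho_7$ lies in the left $\rho_7$-orbit of the generator $1+\alpha+2\beta$ and whose norm equals $\rho_7\theta$; the divisibility $\rho_7\mid\nmh(P)$ is automatic for elements of $\mathcal{S}$ (as derived for Lemma~\ref{lem:quaterniongcd}), so the real content is to force the cofactor $\nmh(P)/\rho_7$ to equal $\theta$ exactly, up to the square unit permitted above. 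Concretely, the four norms to be matched are $\rho_7^2$, $2\rho_7$, $(3+\phi_1)\rho_7$ and $3\rho_7$, of respective field norms $49$, $56$, $91$ and $189$.

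I expect this search to be the only genuine obstacle: there is no structural shortcut beyond what the theorem already supplies, and because $\mathcal{S}$ is non-principal it contains no element of norm $\rho_7$ (such an element $P$ would generate $\mathcal{S}$ via the determinant computation $\det f(\h P)=d_K^2\,\nn(P)^2=d_K^2\cdot7^2=\det f(\mathcal{S})$), so one cannot simply invoke a minimal generator. Instead the generator $1+\alpha+2\beta$, of norm $7=(1-\phi_1)^{-2}\rho_7^3$, must be combined with $\rho_7$ and with small-coefficient quaternions so as to cancel the surplus powers of $\rho_7$ and land on the prescribed norms. This is a finite but genuinely computational verification over the cubic field $\Q(\zeta_7+\zeta_7^{-1})$, most cleanly settled by the direct (or machine-assisted) exhibition of the four quaternions $P$, after which the theorem closes the argument.
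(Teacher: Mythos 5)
Your proposal matches the paper's approach: the paper likewise obtains the corollary by asserting that the four values $\theta$ satisfy the hypotheses of the preceding theorem (``it can be verified''), without exhibiting the quaternions $P\in\mathcal{S}$ with $\nmh(P)=\rho_7\theta$ explicitly. Your explicit check of hypothesis (i) and your framing of hypothesis (ii) as a finite computational search are, if anything, more detailed than the paper's own one-line justification.
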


Let us conclude by noting that while the theorem may potentially allow one to prove that $Q$ represents a great number of elements of $\ok^+$, its limitation is that it cannot prove the representation of prime elements. For that, one would have to show that for a given prime $\lambda$, at least one of the ideals $\mathcal{I}$ given by Lemma~\ref{lem:squarefree-ideals} is principal, because the non-principal ones only give the representation of some multiples of $\lambda$.

\section*{Acknowledgment}
The author wishes to thank V\'it\v ezslav Kala for his useful advice and help.

\end{document}